\newtheorem{thm}{{\sc Theorem}}[section]
\newtheorem{prop}[thm]{{\sc Proposition}}
\newtheorem{cor}[thm]{{\sc Corollary}}
\newtheorem{lem}[thm]{{\sc Lemma}}
\newtheorem{df}[thm]{{\sc Definition}}
\newenvironment{proof}{\begin{sc}\noindent Proof: \end{sc}}{
     \hbox to 2em{}\nobreak\hfill$\blacksquare$\par\medskip}
\newcommand{\LR}{\hbox{Little\-wood-Richard\-son}}
\font\linear=line10 scaled \magstep5
\def\slant{{\linear ,}}
\def\young@lign{\everycr{}\tabskip0pt\halign}
\def\Mathstrut@{\setbox\z@\hbox{$($}\setbox\tw@\null\ht\tw@\ht\z@\dp\tw@\dp\z@
 \box\tw@}
\def\b@m#1{$\m@th\underline{#1}$}
\def\t@p#1{$\m@th\overline{#1}$}
\def\young{
\setbox\strutbox=\hbox{\vrule height3pt depth3.5pt width\z@}
 \offinterlineskip%
 {}\,\vcenter
\bgroup
     \def\1{{}}
     \def\2{\1&\1}
     \def\3{\1&\1&\1}
     \def\4{\1&\1&\1&\1}
%
%
%
\let\\=\cr
 \tabskip0pt\baselineskip0pt\m@th
 \young@lign
 \bgroup\vrule\b@m{\hbox to .75 em{\strut\hfil$##$\hfil}}\vrule %
   &&\b@m{\hbox to .75em{\strut\hfil$##$\hfil}}\vrule\crcr
   \noalign{\hrule}
  }
\def\endyoung{\egroup \egroup\,}
\def\slyoung{
   \setbox\strutbox=\hbox{\vrule height10pt depth2pt width\z@}
   \offinterlineskip%
   {}\,\vcenter
\bgroup
     \def\1{{}}
     \def\2{\1&\1}
     \def\3{\1&\1&\1}
     \def\4{\1&\1&\1&\1}
%
%
%
\let\\=\cr
 \tabskip0pt\baselineskip0pt\m@th
 \young@lign
 \bgroup\lower2pt\hbox{\slant}\kern-25pt\b@m{\hbox to 2.5em{\strut\hfil$##\;\;$}}\vrule
   &&\b@m{\hbox to 2.5em{\strut\hfil$##$\hfil}}\vrule\crcr
   \noalign{\hrule}
  }
\def\endslyoung{\egroup \egroup\,}
\def\frame #1#2#3#4{\vbox{\hrule height #1pt
 \hbox{\vrule width #1pt\kern #2pt
 \vbox{\kern #2pt
 \vbox{\hsize #3\noindent #4}
 \kern #2pt}
 \kern #2pt\vrule width #1pt}
 \hrule height0pt depth #1pt}}
\def\nframe #1#2#3#4{\vbox{
 \hrule height #1pt width0pt
 \hbox{\vrule height0pt width #1pt\kern #2pt
 \vbox{\kern #2pt
 \vbox{\hsize #3\noindent #4}
 \kern #2pt}
 \kern #2pt\vrule width #1pt height0pt}
 \hrule height0pt width0pt}}
\def\leftwall #1#2#3#4{\vbox{
 \hrule height #1pt width0pt
 \hbox{\vrule width #1pt\kern #2pt
 \vbox{\kern #2pt
 \vbox{\hsize #3\noindent #4}
 \kern #2pt}
 \kern #2pt\vrule width #1pt height0pt}
 \hrule height0pt width0pt}}
\def\rightwall #1#2#3#4{\vbox{
 \hrule height #1pt width0pt
 \hbox{\vrule height0pt width #1pt\kern #2pt
 \vbox{\kern #2pt
 \vbox{\hsize #3\noindent #4}
 \kern #2pt}
 \kern #2pt\vrule width #1pt }
 \hrule height0pt width0pt}}
\def\b #1{\frame{.3}{2}{8pt}{\centerline{#1}\vphantom{(}}}
\def\e{\vphantom{e}} 
\def\f{\b{\e}} 
\def\nl{\hfill\break} 
\begin{document}
\title{Matrix Pairs over Discrete Valuation Rings Determine
Littlewood-Richardson Fillings}
\author{Glenn D.\ Appleby (Corresponding Author), Tamsen Whitehead\\
Department of Mathematics\\
Santa Clara University\\
Santa Clara, CA 95053\\
{\tt gappleby@scu.edu, tmcginley@scu.edu}}
\date{ }
\maketitle
\newpage

\noindent {\bf Invariants of Matrix Pairs over Discrete Valuation Rings and
Littlewood-Richardson Fillings}

\medskip

\noindent
Glenn D. Appleby\\
 Tamsen Whitehead\\
{\em Department of Mathematics\\
and Computer Science,\\
Santa Clara University\\
Santa Clara,  CA 95053}\\
gappleby@scu.edu, tmcginley@scu.edu

\noindent \mbox{} \hrulefill \mbox{}
\begin{abstract}
Let $M$ and $N$ be two $r \times r$ matrices of full rank over a
discrete valuation ring $R$ with residue field of characteristic
zero.  Let $P,Q$ and $T$ be invertible $r \times r$ matrices over
$R$.  It is shown that the orbit of the pair $(M,N)$ under the
action $(M,N) \mapsto (PMQ^{-1}, QNT^{-1})$ possesses a discrete
invariant in the form of \LR\ fillings of the skew shape $\lambda /
\mu$ with content $\nu$, where $\mu$ is the partition of orders of
invariant factors of $M$, $\nu$ is the partition associated to $N$,
and $\lambda$ the partition of the product $MN$. That is, we may
interpret \LR\ fillings as a natural invariant of matrix pairs. This
result generalizes invariant factors of a single matrix under
equivalence, and is a converse of the construction in \cite{me}
where Littlewood-Richardson fillings were used to construct matrices
with prescribed invariants. We also construct an example, however,
of two matrix pairs that are not equivalent but still have the same
\LR\ filling.  The filling associated to an orbit is determined by
special quotients of determinants of a matrix in the orbit of the
pair.
\end{abstract}

\noindent \mbox{} \hrulefill \mbox{}

\section{Introduction and Example}
Let us briefly describe our results, and then provide complete definitions and
an example of our main construction.  It is well known that if $M$ and $N$ are
invertible matrices over a discrete valuation ring $R$, and if $\mu$ is the
partition of {\em orders} of the invariant factors of $M$ (with respect to a
fixed uniformizing parameter), with $\nu$ the partition for $N$, and $\lambda$
the partition for the product $MN$, then $c_{\mu \nu}^{\lambda}$, the \LR\
coefficient associated to this triple of partitions, is non-zero. This was
established in the module setting by T. Klein~\cite{klein} and investigated in
the matrix case by R.C. Thompson~\cite{Thomp-prod}. Given a triple of
partitions $(\mu, \nu, \lambda)$ such that $c_{\mu \nu}^{\lambda} \neq 0$,
Azenhas and Sa~\cite{AS} made an explicit construction of a matrix pair over a
discrete valuation $R$ whose orders of invariant factors correspond to the {\em
conjugate} of the partitions $\mu$, $\nu$, and $\lambda$. Later, the first
author~\cite{me} was able to produce, from a given \LR\ filling $\{ k_{ij} \}$,
a matrix pair $(M,N)$ such that the invariant factors of $M$ had orders $\mu$,
the invariant factors of $N$ had orders $\nu$, and those of the product $MN$
had orders $\lambda$.

In this paper we construct a converse to these results, of a sort.
Given a matrix pair $(M,N)$ of full rank over a discrete valuation
ring $R$, we will define a natural group action, generalizing matrix
equivalence for single matrices, and find a special pair $(D_{\mu},
N^*)$ in the orbit of $(M,N)$ from which orders of quotients of
determinants of $N^*$ will yield a \LR\ filling of the skew shape
$\lambda / \mu$ with content $\nu$ when the orders of the invariant
factors of $M$, $N$ and $MN$ are $\mu$, $\nu$, and $\lambda$.
Further, we show that this filling is an invariant (but not a
complete invariant) of the orbit of the pair $(M,N)$.

There has been an active interest in relating the combinatorics of
\LR\ fillings to other mathematical objects.  Survey papers by
Fulton \cite{fulton} and Zelevinsky~\cite{zel} demonstrate that
these combinatorial objects appear in a wide variety of contexts
including representation theory, the eigenvalue structure of
Hermitian matrices, and the Schubert calculus.   There is a fruitful
interplay between using the structures of a particular mathematical
context (in this case, the matrix algebra of discrete valuation
rings) to deepen our understanding of combinatorics, and also to use
combinatorial invariants to not only explain properties of interest
in matrix algebra, but to relate these algebraic questions to a
wider collection of problems.

The authors would like to express their appreciation for the careful
reading and very helpful suggestions made by the referee.
\bigskip

Let us now establish our notation and basic definitions.

 Let $R$ denote a discrete
valuation ring whose residue field is of
 characteristic zero.  There exists
an element $t \in R$ (called a {\em uniformisant} or {\em
uniformizing parameter}) with the property that every non-zero
element $a \in R$ can be written $a = ut^k$, where $k$ is a
non-negative integer and $u$ is unit in $R$ (let $R^{\times}$ denote
the units in $R$).  The choice of uniformizing parameter $t$ in $R$
is not unique, but given such a choice the decomposition $a=ut^k$,
when $u \in R^{\times}$, is uniquely determined.  (Basic facts
concerning discrete valuation rings may be found in~\cite{at-mac},
ch.\ 9 or~\cite{bour-ca}, ch.\ VI.)  Given $a \in R$, $a \neq 0$,
let us define the {\em order} of $a$, denoted $\| a \|$, to be:
\[ \|a\| = k, \quad \hbox{if} \quad a = ut^k, \quad \hbox{for} \ \ u
\in R^{\times}.
\] Note that if $\| x \| \neq \| y \|$, then
\[ \|x+y \| = \min \{ \|x \|, \|y \| \} \leq \| x \|, \| y \|. \]
If $\| x \| = \| y \|$, that is, if $x = u_{x}t^k$ and $y = u_y t^k$, the above
may fail. In particular, let $c_{*} : R \rightarrow R / tR$ denote the natural
map from $R$ to its residue field, identified with $R/(tR)$. Suppose $x = u_x
t^{k}$ and $y = u_y t^{k}$, for $u_x, u_y \in R^{\times}$. Then if
$c_{*}(u_{x}) = -c_{*}(u_y)$, we will have $\| x + y \| > \|x \| = \| y \|$. We
shall call this phenomenon ``catastrophic cancelation."  More generally, we
will say catastrophic cancelation has occurred in summing a collection of
elements $\{ x_{1}, \ldots , x_n \} \subseteq R$ whenever
\[ \min_{1 \leq i \leq n} \| x_{i} \| < \left\| \sum_{i=1}^{n}x_{i} \right\| . \]

Let $M_r (R)$ denote the ring of $r \times r$ matrices over $R$, and let $GL_r
(R)$ denote the invertible $r \times r$ matrices in $M_r (R)$ (that is,
matrices $M \in M_{r}(R)$ such that $\det (M) \in R^{\times}$). As is
well-known, for any $M \in M_{r}(R)$ of full rank there exist matrices $P, Q
\in GL_r (R)$ and an integer partition $\mu= (\mu_{1}, \ldots , \mu_{r})$ such
that
\[ PMQ^{-1} = \left[ \begin{array}{cccc}
t^{\mu_{1}} & 0 & \dots & 0 \\ 0 & t^{\mu_{2}} & \ddots &  \vdots \\
\vdots & \ddots & \ddots & 0 \\
0 & \dots & 0 & t^{\mu_{r}} \end{array} \right], \] where we may
assume $\mu_{1} \geq \mu_{2} \geq \dots \geq \mu_{r} \geq 0.$ (Note
we are writing the invariants in {\em decreasing} order.)  These
diagonal entries are the {\em invariant factors} associated to $M$
(see~\cite{hartley}, for example).
 The matrix $M$ uniquely determines the invariant factors, so we shall call
 the partition $(\mu_{1}, \mu_{2}, \ldots , \mu_{r})$, given by the {\em orders}
 of the invariant factors (with respect to $t$), the
{\em invariant partition} of $M$, and denote it by $inv\,(M) =
\mu=(\mu_{1}, \mu_{2}, \ldots , \mu_{r})$.

Here is an example of our main construction.  Precise definitions and
proofs will follow.   In this example, let $M$ already assume the diagonal form:
\[ M= \left[
\begin{array}{cccc} t^7 & 0 & 0 & 0 \\ 0 & t^4 & 0 & 0 \\ 0 & 0 & t^2 & 0\\ 0 & 0
& 0 & t \end{array} \right], \] so that $inv\,(M) =\mu=(7,4,2,1)$.  Then let
$N$ be the matrix
\[ N=
 \left[ \begin{array}{cccc}
t^{4} & t^4 & t^3 & t^2 \\
0 & t^{6} &  t^5+t^4 & t^4+2t^3\\
0 & 0& t^{5} & 2t^4+t^3 \\
 0 & 0  & 0 & t^{4}
\end{array} \right] .
\]
  The form of $N$ is not arbitrary.  It will be shown
  that a pair similar to $M$ and $N$ may be found in the orbit of {\em any} pair in a manner
  described below.  A standard calculation shows that $inv\,(N) = \nu =(8,5,4,2)$, and
that $inv\,(MN) = \lambda =  (11,10,7,5)$. Let us use the notation
\[ \| (i_1, \dots, i_k) \|, \]
to denote the order of the determinant of the submatrix of $N$ above, using
rows $i_1, \dots , i_k$, and the $k$ {\em right-most} columns.  So, for
example, $\| (1,2,4) \|$ will denote the order of the determinant of the
submatrix of $N$ with rows $1,2$, and $4$, using columns $2,3,$ and $4$.

Let us recursively define integers $k_{ij}$ by the following relations (we
define the order of the empty determinant $\|(\,)\|$ to be $0$):
\begin{eqnarray*}
k_{11} & = & \|(1, \,2 , \,3, \,4)\| -\|(2, \,3, \,4)\|= 4\\
k_{12} & = & \|(2, \,3 , \,4)\|-\|(1, \,3, \,4)\| =  2 \\
k_{13} & = & \|(1, \,3 , \,4)\|-\|(1, \,2, \,4)\|=1 \\
k_{14} & = & \|(1, \,2 , \,4)\|-\|(1, \,2, \,3)\|=1 \\
& & \\
k_{12}+k_{22} & = & \|(2, \,3 , \,4)\|-\|(3, \,4)\|=6 \\
k_{13}+k_{23} & = & \|(3, \,4 )\|-\|(1, \,4)\|=2 \\
k_{14}+k_{24} & = &\|(1, \,4 )\|-\|(1, \,2)\|=1 \\
& & \\
k_{13}+k_{23}+k_{33} & = & \|(3, \,4 )\|-\|(4)\|=5 \\
k_{14}+k_{24}+k_{34} & = & \|(4)\|-\|(1)\|=2 \\
& & \\
k_{14}+k_{24}+k_{34}+k_{44} & = & \|(4)\|-\|(\,)\|=4 .
\end{eqnarray*}

Note the telescoping of the sums in each group, so that, for instance, $k_{11}+
k_{12}+k_{13}+k_{14} = \|(1,2,3,4)\| - \|(1,2,3)\|$ and
$(k_{12}+k_{13}+k_{14})+(k_{22}+k_{23}+k_{24} )= \|(2,3,4)\| - \|(1,2)\|$.
Letting the $k_{ij}$ above denote the number of $i$'s in row $j$ in the skew
shape $\lambda / \mu,$ the matrix determinants above actually define a {\em \LR\
filling} of $\lambda / \mu$ with content $\nu$, as pictured in the diagram
below.  The boxes with no numbers in them form the partition $\mu$, which is
contained in the overall diagram of boxes $\lambda$:

\bigskip
\hspace{1.8in} \vbox{ \offinterlineskip \openup-1.5pt \nl
\f\f\f\f\f\f\f\b{$1$}\b{$1$}\b{$1$}\b{$1$} \nl
\f\f\f\f\b{$1$}\b{$1$}\b{$2$}\b{$2$}\b{$2$}\b{$2$}\nl
\f\f\b{$1$}\b{$2$}\b{$3$}\b{$3$}\b{$3$}\nl
\f\b{$1$}\b{$3$}\b{$4$}\b{$4$}\nl}

Further, this filling is uniquely determined by the matrix pair
$(M,N)$ up to a natural notion of equivalence, defined below.  We
will show that {\em any} given pair of matrices is equivalent to a
pair from which a system of determinantal formulas like the above
may be obtained to determine a particular \LR\ filling associated to
the orbit of the pair.

\section{Notation, Definitions, Background}

In what follows, given any partition $\alpha$, we shall let $\alpha_k$ denote
its $k$-th term, and assume that $\alpha_{k} \geq \alpha_{k+1}$. We shall also
write $\alpha \subseteq \beta$, for two partitions $\alpha$ and $\beta$, to
mean $\alpha_{k} \leq \beta_{k}$ for all $k \geq 1$.  This notation is
suggested by the fact that if we represent the partitions by non-increasing,
left-justified rows of boxes (called the {\em diagram} or {\em Young diagram}
of the partition), then $\alpha \subseteq \beta$ implies the diagram for
$\alpha$ fits inside the diagram of $\beta$. When $\alpha \subseteq \beta$, we
will denote by $\beta / \alpha$ the {\em skew diagram} consisting of the
diagram of $\beta$, with the diagram of $\alpha$ removed.  In the example
above, $\mu$ is depicted by the empty boxes, and the skew shape $\lambda / \mu$
consists of the boxes of $\lambda$ containing integers. Typically, partitions
are denoted by {\em infinite} decreasing sequences containing only finitely
many non-zero terms. The non-zero terms are the {\em parts} of the partition,
and the number of parts in a partition is its {\em length}.  We will denote the
length of partition $\mu$ by $length(\mu)$.
 The sum of the parts of a partition $\lambda$ is denoted $| \lambda |$ and is
called the {\em weight} of $\lambda$. See \cite{mac}, chapter 1, and
also~\cite{fulton-book},~\cite{sagan},~\cite{stanley}.

\medskip

We begin with the following combinatorial definition, which we shall relate to
matrices over $R$ presently.

\begin{df}
Let $\mu, \nu$, and $\lambda$ be partitions, with $length(\mu)$, $length(\nu) \leq
length(\lambda) \leq r$.  Let $S= (\lambda^{(0)},\lambda^{(1)}, \cdots ,
\lambda^{(r)})$ be a sequence of partitions in which $\lambda^{(0)} = \mu$.

The sequence $S$ is called a {\em \LR\ Sequence} of type $(\mu, \nu; \lambda)$
if there is a triangular array of integers $F=\{ k_{ij}: 1 \leq i \leq r, i\leq
j \leq r \}$ (called the {\em filling}) such that, for $1 \leq i \leq j, \ \
\lambda^{(i)}_{j} = \mu_{j} + k_{1j} + \dots + k_{ij}$, subject to the
conditions (LR1), (LR2), (LR3), and (LR4) below. We shall say, equivalently,
that any such set $F$ determines a {\em \LR\ filling} of the skew shape
$\lambda / \mu$ with content $\nu$. \label{LR-def}

(LR1) (Sums) For all $1 \leq i \leq j \leq r$,
\[  \mu_{j}+\sum_{s=1}^{j} k_{sj} = \lambda_{j}, \quad \hbox{and} \quad \sum_{s=i}^{r}k_{is} =
\nu_{i}. \]

(LR2)  (Non-negativity) For all $i$ and $j$, we have $k_{ij} \geq 0$.

(LR3) (Column Strictness) For each $j$, for $2 \leq j \leq r$ and $1 \leq i \leq j$ we require
$\lambda^{(i)}_{j} \leq \lambda^{(i-1)}_{(j-1)},$ that is,
\[  \mu_{j}+ k_{1j} + \cdots k_{ij} \leq \,  \mu_{(j-1)} +k_{1,(j-1)} +
 \cdots + k_{(i-1),(j-1)} . \label{cs}
\]

(LR4)  (Word Condition) For all $1 \leq i \leq r-1$, $i \leq j \leq r-1$,
\[   \sum_{s=i+1}^{j+1} k_{(i+1),s} \  \leq \  \sum_{s=i}^{j}k_{is}. \] \label{wd}
\end{df}
The first equality of (LR1) ensures that the sum of the number of boxes in row
$j$ of the filled diagram (including the empty boxes of the parts of $\mu$) sum
to $\lambda_{j}$, the $j$-th part of the partition $\lambda$, while in the
second equality we require that the sum of the number of $i$'s in all the rows
is $\nu_{i}$, the $i$-th part of the partition $\nu$.  The condition (LR2) is
included here because the non-negativity of the $k_{ij}$ will not be obvious
from the definition we shall adopt, and will need to be proved. (LR3) says that
the numbers in the filling are strictly increasing down columns. Lastly, (LR4)
indicates that the number of $i$'s in rows $i$ through $j$ is greater than or
equal to the number of $(i+1)$'s in rows $(i+1)$ through $(j+1)$.

\begin{df} Given partitions $\mu$, $\nu$, and $\lambda$, we shall
let $c_{\mu \nu}^{\lambda}$ denote the number of \LR\ fillings of the skew shape $\lambda / \mu$ with content $\nu$.  The non-negative integer $c_{\mu \nu}^{ \lambda}$ is
called the {\em \LR\ coefficient} of the partitions $\mu$, $\nu$, and
$\lambda$.
\end{df}

We begin with the well-known result relating Littlewood-Richardson coefficients
to invariants of matrices over discrete valuation rings.

\begin{prop}[\cite{me},\cite{AS},\cite{klein},\cite{mac},\cite{Thomp-prod}]
Let $R$ be a discrete valuation ring and let $\mu, \nu,$ and $\lambda$ be
partitions of length $r$. If $c_{\mu \nu}^{\lambda}
> 0$, then we may find $r \times r$ matrices $M$ and $N$ over $R$
such that $inv\,(M) = \mu, inv\,(N) = \nu$, and $inv\,(MN) = \lambda$, and
conversely. \label{mat-lr}
\end{prop}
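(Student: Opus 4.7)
The plan is to prove both directions by leveraging the cited references and constructions. For the ``if'' direction, given an \LR\ filling $\{k_{ij}\}$ realizing $c_{\mu\nu}^\lambda > 0$, the goal is to build matrices $M,N$ explicitly. I would take $M = D_\mu$, the diagonal matrix with entries $t^{\mu_1},\ldots,t^{\mu_r}$, and then construct an upper-triangular $N$ whose $(i,j)$ entry for $i \leq j$ is a carefully chosen sum of monomials in $t$ whose orders are prescribed by the partial sums $k_{ii}+k_{i,i+1}+\cdots+k_{ij}$ appearing in the filling. This is the construction of \cite{me}, essentially reversing the determinantal formulas illustrated in the example of the introduction. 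The diagonal of $N$ carries orders $\nu_1,\ldots,\nu_r$, so that $\mathrm{inv}(N)=\nu$ once one verifies $N$ is equivalent to a diagonal matrix with these orders, and one checks that the ordered minors computed from the $k$ right-most columns of $N$ give exactly the $\lambda_j$ when $M$ is absorbed into the product $MN$.

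For the ``only if'' direction, I would follow Klein's module-theoretic approach from \cite{klein}, also recorded in \cite{mac}. Let $E = R^r$ with $N(E)\subseteq E$ and $M(N(E)) \subseteq N(E) \subseteq E$. The invariant partitions $\lambda,\nu,\mu$ are then the partitions associated to the cyclic decompositions of $E/(MN)(E)$, $E/N(E)$, and $N(E)/(MN)(E)$ respectively. Choosing a filtration by submodules refining this chain and tracking successive quotients produces a sequence $\lambda^{(0)}=\mu \subseteq \lambda^{(1)} \subseteq \cdots \subseteq \lambda^{(r)}=\lambda$ of partitions from which the $k_{ij}$ are read off as $\lambda^{(i)}_j - \lambda^{(i-1)}_j$. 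The verification that this sequence satisfies (LR1)--(LR4) amounts to checking standard combinatorial properties of invariant factors of quotient modules, where (LR1) and (LR2) are automatic from the filtration, (LR3) is the column-strictness inherited from the interlacing of invariant factors of submodules of a cyclic module, and (LR4) is the word or lattice condition, proved via a Jeu-de-taquin-style argument on the nested partition sequence.

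The main obstacle I anticipate is the ``only if'' direction, specifically verifying (LR4). The non-negativity (LR2) and row-sum conditions (LR1) are essentially bookkeeping given the filtration, and (LR3) follows from the classical interlacing of invariant factors under submodule inclusion. By contrast, the lattice/word condition (LR4) is subtle: it rests on the fact that the filtration can be chosen so that the $\lambda^{(i)}$ grow in a manner compatible with the canonical left-to-right reading word being a lattice word, which in the module setting is precisely Klein's ``conservation'' argument. The forward direction has, as a potential pitfall, the risk of catastrophic cancelation in computing orders of minors of $N$, but the residue field having characteristic zero ensures that generic choices of units in $R^\times$ for the coefficients of $N$ avoid this, since the bad loci are proper closed subvarieties of the residue field.

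Since every ingredient of the proposition has appeared in the literature \cite{me,AS,klein,mac,Thomp-prod}, the actual write-up will be brief, citing the forward construction to \cite{me} or \cite{AS} and the converse to \cite{klein} (or equivalently \cite{Thomp-prod} for the matrix-theoretic formulation), while noting that the determinantal reformulation of the filling developed in the present paper gives an alternative route to the converse that will be made explicit in the later sections.
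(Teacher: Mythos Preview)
The paper does not prove this proposition; it is stated with citations to \cite{me,AS,klein,mac,Thomp-prod} and no argument, treated as a known result. Your plan to cite \cite{me} or \cite{AS} for the forward construction and \cite{klein,mac,Thomp-prod} for the converse is exactly what the paper does, and your closing paragraph already anticipates this.

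One factual slip in your sketch is worth correcting before you write anything up. In the construction of \cite{me} (recalled as Theorem~\ref{my-thm} here), the diagonal entries of the upper-triangular $N=N_1\cdots N_r$ have orders $\lambda_j-\mu_j$, not $\nu_j$; the paper notes this explicitly after its worked example (``$\|(N)_{i,i}\|=\lambda_i-\mu_i$''). The equality $inv\,(N)=\nu$ is not read off the diagonal but follows from the factored-realization conditions on the partial products $N_1\cdots N_i$, and uses the \LR\ axioms on $\{k_{ij}\}$. Likewise, the orders of the entries of $N$ are governed by sums of the form $k_{ij}+k_{i,j+1}+\cdots$ down columns of the triangular array, not the row-wise partial sums $k_{ii}+k_{i,i+1}+\cdots+k_{ij}$ you describe. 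These are details of the cited construction rather than gaps in your overall approach.
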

\bigskip

\noindent
\section{Matrix Realizations of \LR\ Fillings}

By Proposition~\ref{mat-lr} above, if $F=\{ k_{ij}: 1 \leq i \leq r,
\, i \leq j \leq r \}$ is a \LR\ filling of $\lambda / \mu$ with
content $\nu$, then there must exist matrices $M$ and $N$, over $R$
so that $inv\,(M) = \mu$, $inv\,(N) = \nu$, and $inv\,(MN) =
\lambda$. We would like to see how a specific \LR\ filling
determines such matrices.

\begin{df}
Suppose we are given a \LR\ filling $F$, with associated \LR\ sequence $S$. By
a {\em factored matrix realization} for the \LR\ filling $F$ of $\lambda / \mu$
with content $\nu$ (or the \LR\ sequence $(\lambda^{(0)},\lambda^{(1)} \ldots , \lambda^{(r)})$ of type $(\mu,\nu;
\lambda)$ ) we shall mean a set of $r+1$ matrices $M,N_{1},N_{2}, \ldots ,
N_{r}$ so that \label{LR-def}
\begin{enumerate}
\item $inv\,(M) = \mu= \lambda^{(0)}$
\item $inv\,(N_{1}N_{2} \cdots N_{i}) =
(\nu_{1}, \nu_{2}, \ldots, \nu_{i}, 0,0,\ldots )$ for all $i \leq r$.  So, in
particular, $inv\,(N_1\cdots N_r) = \nu$.
\item $inv\,(MN_{1}N_{2} \cdots N_{i}) = \lambda^{(i)}$, for $1 \leq i \leq r$.
\end{enumerate}
\end{df}

Factored matrix realizations code up the \LR\ filling of the skew diagram
$\lambda / \mu$. In \cite{me} a simple construction of a factored matrix
realization was obtained from a given \LR\ filling.  This result, though based
on conjugate sequences, had first been obtained in \cite{AS}.

\begin{thm}[\cite{me}]
Let $F=\{ k_{ij}: 1 \leq i  \leq j \leq r \}$ be a \LR\ filling of $\lambda /
\mu$ with content $\nu$. Define $r \times r$ matrices $M,N_{1},N_{2}, \ldots
N_{r}$ over $R$ by \label{my-thm}
\begin{enumerate}
\item $M = diag(t^{\mu_{1}},t^{\mu_{2}}, \ldots , t^{\mu_{r}})$,
where $ \mu_{1} \geq \mu_{2} \geq \cdots \geq \mu_{r}\geq 0$.
\item Define the block matrix $N_{i}$ by
\[ N_{i} = \left[ \begin{array}{c|c}
1_{i-1} & 0 \\
\hline 0 & T_{i}
\end{array} \right] \]
where $T_{i}$ is the $(r-i+1) \times (r-i+1)$ matrix:
\[ T_{i} = \left[ \begin{array}{ccccc}
t^{k_{i,i}}    & 1    & 0    & \cdots    &       0  \\
0        & t^{k_{i,i+1}} & 1 &  \ddots   &       \vdots     \\
0 &      0& t^{k_{i,i+2}} & \ddots  &        0  \\
 \vdots & \vdots  & \ddots  &  \ddots    & 1 \\
0  &  0  & \cdots  &  0 &    t^{k_{i,r}} \\

\end{array} \right] \]
and $1_{i-1}$ is an $(i-1) \times (i-1)$ identity matrix.
\end{enumerate}
Then $M, N_{1}, N_{2}, \ldots , N_{r}$ is a factored matrix realization of the
Little\-wood\--Rich\-ard\-son filling $F$. \label{lr_to_mat}
\end{thm}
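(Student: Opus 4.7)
The plan is to exploit the upper triangular structure of the matrices. Each $N_s$ is upper triangular with $(j,j)$-entry equal to $1$ when $j<s$ and $t^{k_{s,j}}$ when $j\geq s$. Since the diagonal of a product of upper triangular matrices is the product of the diagonals, the matrix $A^{(i)} := MN_1\cdots N_i$ is upper triangular with
\[
(A^{(i)})_{jj} \;=\; t^{\mu_j}\cdot\prod_{s=1}^{\min(i,j)} t^{k_{s,j}} \;=\; t^{\lambda^{(i)}_j}.
\]
In particular, property~(1) is immediate and $\det(A^{(i)}) = t^{|\lambda^{(i)}|}$. Condition (LR3) together with (LR2) forces $\lambda^{(i)}$ to be weakly decreasing, so the diagonal entries of $A^{(i)}$ already appear in decreasing order of orders.

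For property~(2), the same diagonal-of-product analysis gives that the diagonal of $N_1\cdots N_i$ has total order $\nu_1 + \cdots + \nu_i$ by the second equality of (LR1). To pin down the invariant partition as $(\nu_1,\ldots,\nu_i,0,\ldots,0)$ rather than another partition of the same weight, I would exhibit an $(r-i)\times(r-i)$ unit minor: the identity blocks $1_{s-1}$ in each $N_s$ together with the $1$'s on the superdiagonals of the $T_s$ yield a unitriangular submatrix on the last $r-i$ rows, forcing the last $r-i$ invariant factor orders to vanish.

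For property~(3), the key reduction is to identify the invariant partition of the upper triangular matrix $A^{(i)}$ (whose diagonal entries are $t^{\lambda^{(i)}_j}$) as precisely $\lambda^{(i)}$. The determinant already has the correct order $|\lambda^{(i)}|$, so by the standard relation between invariant factors and orders of gcds of minors it suffices to show, for each $k$, that the minimum order of a $k\times k$ minor of $A^{(i)}$ is $\lambda^{(i)}_{r-k+1} + \cdots + \lambda^{(i)}_r$. I would identify an explicit family of minors (for instance, rows $\{r-k+1,\ldots,r\}$ together with the last $k$ columns) achieving this value as an upper bound, and then show that no other minor can have strictly smaller order.

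The main obstacle is precisely this last step: ruling out catastrophic cancelation in any other $k\times k$ minor of $A^{(i)}$. This is where the column strictness condition (LR3) and the word condition (LR4) enter. Condition (LR3) both guarantees that $\lambda^{(i)}$ is weakly decreasing (so its appearance as an invariant partition is consistent) and constrains the orders of the off-diagonal entries of $A^{(i)}$ so that the dominant (minimum-order) monomials arising in any minor expansion cannot cancel against one another. The word condition (LR4) controls the analogous interaction across rows of the filling, preventing cancellations from propagating through the bidiagonal blocks $T_s$ when the minor straddles several of them. With catastrophic cancelation ruled out, a direct minor expansion shows the minimum minor order is as claimed, completing the identification $inv(A^{(i)}) = \lambda^{(i)}$.
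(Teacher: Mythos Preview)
The present paper does not actually prove this theorem: it is quoted from the earlier paper \cite{me} and only illustrated by the worked example that follows.  So there is no proof here to compare your attempt against directly.

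Evaluating your proposal on its own terms, the overall strategy (compute the diagonal of the triangular product, then bound all minors) is the natural one, but two of the steps are not yet arguments.

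For property~(2), exhibiting an $(r-i)\times(r-i)$ unit minor only shows that the invariant partition of $N_{1}\cdots N_{i}$ has length at most $i$.  Together with the determinant this fixes the \emph{sum} $\nu_{1}+\cdots+\nu_{i}$, but not the individual parts: nothing you have written rules out, say, $(\nu_{1}+\nu_{2},0,\ldots)$ in place of $(\nu_{1},\nu_{2},0,\ldots)$.  You still need minor computations (or an induction on $i$) to pin down each $\nu_{s}$.  Incidentally, the unit minor does not live in the last $r-i$ rows as you claim---those rows contain only positive powers of $t$---but rather in the \emph{first} $r-i$ rows and last $r-i$ columns, via the chain of superdiagonal $1$'s.

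For property~(3), the step you correctly identify as the ``main obstacle'' is the entire content of the theorem, and you do not carry it out.  Saying that (LR3) and (LR4) ``constrain the orders of the off-diagonal entries \ldots\ so that the dominant monomials \ldots\ cannot cancel'' is a hope, not a mechanism: you have not said which inequality bounds which entry, nor how the bound propagates through a general minor expansion.  In the original reference this is done by an explicit induction on $i$ that tracks the order of every entry of $MN_{1}\cdots N_{i}$, and the \LR\ inequalities enter at identifiable places.  Until that computation (or an equivalent one) is supplied, what you have is a plan rather than a proof.
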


(Note that in \cite{me}, Theorem~\ref{my-thm} was written so that invariants
were calculated in {\em increasing} order, and so the matrices used in the
factorization have a slightly different form.)

Let's consider our main example.  Recall $\mu = (7,4,2,1)$, $inv\,(N) = \nu
=(8,5,4,2)$, and that $inv\,(MN) = \lambda =  (11,10,7,5)$. We will use the
following \LR\ filling of the skew diagram $ \lambda / \mu$:

\bigskip
\hspace{1.4in} \vbox{ \offinterlineskip \openup-1.5pt \nl
\f\f\f\f\f\f\f\b{$1$}\b{$1$}\b{$1$}\b{$1$} \nl
\f\f\f\f\b{$1$}\b{$1$}\b{$2$}\b{$2$}\b{$2$}\b{$2$}\nl
\f\f\b{$1$}\b{$2$}\b{$3$}\b{$3$}\b{$3$}\nl \f\b{$1$}\b{$3$}\b{$4$}\b{$4$}\nl}
\bigskip

So, for instance, $k_{11} = 4$, $k_{12} = 2$, $k_{13} = 1$, and $k_{14}= 1$.
Then by Theorem~\ref{lr_to_mat} we define
\[ M = \left[ \begin{array}{cccc}
t^7 & 0 & 0 & 0 \\
0 & t^{4} & 0 & 0 \\
0 & 0 & t^{2} & 0 \\
0 & 0 & 0 & t
\end{array} \right] \]
and
\[ N_{1} = \left[ \begin{array}{cccc}
t^4 & 1 & 0 & 0\\
0 & t^{2} & 1 & 0 \\
0 & 0 & t^{1} & 1 \\
0 & 0 & 0 & t
\end{array} \right], \quad N_{2} = \left[
\begin{array}{c|ccc}
1 & 0 & 0 & 0  \\ \hline
0& t^4 & 1 & 0  \\
0&  0& t^{1} & 1  \\
0& 0 &0 &  t^0  \\
\end{array} \right],
N_{3} = \left[ \begin{array}{cc|cc}
1 & 0 & 0 & 0 \\
0 & 1 &  0 &0 \\
\hline 0 & 0 & t^3 & 1 \\
0 & 0 & 0 & t
\end{array} \right], \quad N_{4} = \left[
\begin{array}{ccc|c}
1 & 0 & 0 & 0 \\
0 &  1 & 0 & 0 \\
0 &  0  &   1 & 0 \\
\hline 0 & 0 & 0 & t^2
\end{array} \right]. \]
Then define $N$ by
\[ N  = N_{1}N_{2}N_{3}N_{4} = \left[ \begin{array}{cccc}
t^{4} & t^4 & t^3 & t^2 \\
0 & t^{6} &  t^5+t^4 & t^4+2t^3\\
0 & 0& t^{5} & 2t^4+t^3 \\
 0 & 0  & 0 & t^{4}
\end{array} \right] , \]
and we recover the matrix of our main example.  Clearly $inv\,(M) = \mu$. The
orders of entries in the product matrix $N$ increase as one proceeds to the
left in any row, and down any column. However, we find that in the product
\[ MN =\left[ \begin{array}{cccc}
t^{11} & t^{11} & t^{10} & t^{9} \\
0 & t^{10} &  t^9+t^8 & t^8+2t^7\\
0 & 0& t^{7} & 2t^6+t^5 \\
 0 & 0  & 0 & t^{5}
\end{array} \right], \]
the orders now increase as we proceed {\em up} any column.  From this it is
easily seen that $inv\,(MN) = (11,10,7,5) = \lambda$ (the orders of the
diagonal entries). An easy calculation shows that $inv\,(N) = (8,5,4,2)= \nu$.
Note also that $\|(N)_{i,i}\| = \lambda_{i} - \mu_{i}, $ and the orders of the
entries along the top row satisfy $ \|(N)_{1,i}\| = k_{i, i} $.

\medskip

So, given a Littlewood-Richardson filling of the skew-shape $\lambda / \mu$
with content $\nu$, we are able to construct a pair of matrices $M, N \in
M_{n}(R)$ such that $inv\,(M) = \mu, inv\,(N) = \nu$, and $inv\,(MN) =
\lambda$.

\section{The $\mu$-Generic Form for Matrix
Pairs} In this paper we shall prove a converse to Theorem~\ref{my-thm}.  Let
${\cal M}_{r}^{(2)}$ denote the set of all pairs $(M,N)$ of $r \times r$
matrices over $R$ of {\em full rank}. We shall show that every matrix pair
$(M,N) \in {\cal M}_{r}^{(2)}$, such that $inv\,(M) = \mu, inv\,(N) = \nu$ and
$inv\,(MN) = \lambda$, determines a Littlewood-Richardson filling of the
skew shape $\lambda / \mu$ with content $\nu$.  In fact, we can say more.

Given a partition $\mu = ( \mu_{1} , \mu_{2}, \ldots , \mu_{r} )$, define
\[ D_{\mu} = diag(t^{\mu_{1}}, \dots , t^{\mu_{r}} ) = \left[
\begin{array}{cccc} t^{\mu_{1}} & 0 & \cdots &
 0 \\
 0 & t^{\mu_{2}} & \ddots & \vdots \\
 \vdots & \ddots &\ddots & 0 \\
 0 & \cdots & 0  & t^{\mu_{r}} \end{array} \right] . \]

\begin{df}

\begin{enumerate}
\item Let $(P,Q,T) \in GL_{r}(R)^3$ be a triple of invertible
matrices (where $GL_{r}(R)^3$ forms a group under multiplication in each
component).
 Let $(M,N) \in {\cal M}_{r}^{(2)}$, and then define the map
 $GL_{r}(R)^3 \times {\cal M}_{r}^{(2)} \rightarrow {\cal
 M}_{r}^{(2)}$ by
 \[ (P,Q,T) \cdot (M,N) = (PMQ^{-1}, QNT^{-1}). \]
 \item If $(M',N') =(P,Q,T) \cdot (M,N)$ for some triple of
 invertible matrices $(P,Q,T) \in GL_{r}(R)^3$, we will say the pair
 $(M',
 N')$ is {\em pair equivalent} to $(M,N)$.
\end{enumerate}
\end{df}

From these definitions, the following results are easily checked.

\begin{prop}
\label{orbit}
\begin{enumerate}
\item The mapping
\[ (P,Q,T) \cdot (M,N) = (PMQ^{-1}, QNT^{-1}) \]
is a group action of $GL_{r}(R)^3$ on the set of pairs of full-rank matrices
${\cal M}_{r}^{(2)}$.
\item ``Pair equivalence'' is an equivalence relation on
${\cal M}_{r}^{(2)}$.
\item Every pair $(M,N) \in {\cal M}_{r}^{(2)}$ is pair equivalent to a
pair $(D_{\mu}, N')$, where $D_{\mu} = diag(t^{\mu_{1}}, \ldots ,
t^{\mu_{r}})$, and $\mu = (\mu_{1}, \ldots , \mu_{r}) = inv\,(M)$.
\end{enumerate}
\end{prop}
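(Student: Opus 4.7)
The plan is to treat the three parts in order, with the bulk of the work resting on the Smith normal form invocation for part (3), and the first two parts reducing to routine group-theoretic bookkeeping.

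For part (1), I will verify the two group-action axioms directly. The identity element $(I,I,I)$ clearly fixes every pair $(M,N)$. For compatibility, one computes
\[
(P_1,Q_1,T_1)\cdot\bigl((P_2,Q_2,T_2)\cdot(M,N)\bigr)
= (P_1 P_2 M Q_2^{-1} Q_1^{-1},\; Q_1 Q_2 N T_2^{-1}T_1^{-1}),
\]
which equals $\bigl((P_1P_2,Q_1Q_2,T_1T_2)\bigr)\cdot(M,N)$ by associativity of matrix multiplication and the identity $(Q_1Q_2)^{-1}=Q_2^{-1}Q_1^{-1}$. One also needs to check that $(PMQ^{-1},QNT^{-1})$ genuinely lies in ${\cal M}_r^{(2)}$; this is immediate since $P,Q,T$ are invertible over $R$ and so preserve the full-rank condition.

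For part (2), nothing new is required: pair equivalence is defined as orbit membership under the action of part (1), and orbit relations of group actions are automatically reflexive (via the identity), symmetric (via inversion of the acting triple), and transitive (via composition).

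For part (3), the key input is the existence of a Smith normal form for matrices of full rank over the discrete valuation ring $R$, which was recalled in the introduction: given $M\in M_r(R)$ of full rank there exist $P,Q\in GL_r(R)$ such that $PMQ^{-1}=D_\mu$, where $\mu=\operatorname{inv}(M)$. I would then apply the triple $(P,Q,I)\in GL_r(R)^3$ to the pair $(M,N)$, obtaining
\[
(P,Q,I)\cdot(M,N) = (PMQ^{-1},\; QN\cdot I^{-1}) = (D_\mu,\; QN),
\]
and set $N'=QN$. By part (1) this shows $(M,N)$ is pair equivalent to $(D_\mu,N')$, as required.

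There is no real obstacle here; the only subtlety worth flagging is the decoupling that makes the proposition work: because the left and right actions on $M$ use $P$ and $Q$ while the action on $N$ uses $Q$ and $T$, the matrix $Q$ chosen to diagonalize $M$ is forced to be used on the left of $N$, but the freedom in $T$ (which does not appear in the $M$-slot) means we can leave $N$ essentially untouched apart from this $Q$-action. This is precisely the feature that makes the resulting $N'$ an interesting object, and it is also what makes the pair orbit a finer invariant than equivalence of $M$ alone.
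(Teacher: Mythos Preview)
Your proposal is correct. The paper does not actually provide a proof of this proposition; it simply remarks that ``From these definitions, the following results are easily checked,'' and your write-up supplies exactly the routine verifications (group-action axioms, orbit equivalence, and Smith normal form applied to $M$ with $T=I$) that the authors left to the reader.
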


This action of $GL_{r}(R)^3$ on ${\cal M}_{r}^{(2)}$ is chosen in order to
preserve the invariant partitions of  $M$ and $N$, and also $MN$, so that if
$(M,N)\in {\cal M}_{r}^{(2)}$ is pair equivalent to $(M',N')$, then $\mu =
inv\,(M) = inv\,(M')$, $\nu = inv\,(N) = inv\,(N')$, and $\lambda = inv\,(MN) =
inv\,(M'N')$.

In this paper we shall show that the pair equivalence class of
$(M,N) \in {\cal M}_{r}^{(2)}$, when $\mu = inv\,(M)$, $\nu =
inv(N)$, and $\lambda = inv\,(MN)$, uniquely determines a
Littlewood-Richardson filling of the skew-shape $\lambda / \mu$ with
content $\nu$. The construction we present here can also be applied
to the pair to obtain a \LR\ filling of $\lambda / \nu$ with content
$\mu$, providing yet another proof that $c_{\mu \nu}^{\lambda} \neq
0$ implies $c_{\nu \mu}^{\lambda} \neq 0$.  It appears (\cite{new})
that the pair of fillings associated to a matrix pair are in {\em
bijection}, so that a given filling of $\lambda / \mu$ with content
$\nu$ occurs with a {\em uniquely} determined filling of $\lambda /
\nu$ with content $\mu$, independent of the particular matrix
realization of it. In fact, the matrix setting recovers the same
combinatorially determined bijections found in \cite{Sottile}, and
also~\cite{pak-2}.

A given pair of fillings is not a complete invariant of the orbit of a matrix
pair, however. In particular, we will in show in Section 6 that there are
distinct pairs $(M,N)$ and $(M',N')$ which give rise to the {\em same}
Littlewood-Richardson filling, but which are {\em not} pair equivalent. This
suggests the set of orbits possesses a more intricate structure, for which the
\LR\ fillings provide a discrete invariant. We shall not pursue this further
here, though it does appear that the orbits might be classified by a collection
of continuously varying parameters within a collection of orbits for which the
discrete invariants have been fixed.

To continue, let
\[ GL_{r}(R)^3 \backslash {\cal M}_{r}^{(2)} \]
denote the set of pair equivalence classes.  It is clear we may decompose this
set as a disjoint union of $GL_{r}(R)^3$-invariant orbits according to the
triple of invariant partitions $(\mu,\nu,\lambda)$ associated to a matrix pair:
\[GL_{r}(R)^3 \backslash {\cal M}_{r}^{(2)} \leftrightarrow
\coprod_{\mu,\nu,\lambda} GL_{r}(R)^3 \backslash ({\cal
M}_{r}^{(2)})_{\mu,\nu,\lambda}, \] where
\[({\cal M}_{r}^{(2)})_{\mu,\nu,\lambda} = \Big\{ (M,N)\in {\cal M}_{r}^{(2)} :\hbox{$\mu =
inv\,(M)$, $\nu = inv\,(N)$, and $\lambda = inv\,(MN)$} \Big\}. \]

Fix, for now, a triple of partitions $(\mu, \nu, \lambda)$ which determines a
set of orbits denoted by $GL_{r}(R)^3 \backslash ({\cal
M}_{r}^{(2)})_{\mu,\nu,\lambda}$.  It is clear from Proposition~\ref{orbit}
that each orbit contains a pair $(D_{\mu},N)$ such that $inv\,(N) = \nu$ and
$inv\,(D_{\mu}N) = \lambda$.  Let ${\cal G}_{\mu}$ denote the subgroup
stabilizing the first term $D_{\mu}$:
\[ {\cal G}_{\mu} = \Big\{ (P,Q,T) \in GL_{r}(R)^3 : \hbox{for all
$N$,} \, (P,Q,T)\cdot (D_{\mu},N) = (D_{\mu},N'),\ \hbox{for some} \
N' \in M_{n}(R) \Big\}.
\] There is a natural bijection
\[GL_{r}(R)^3 \backslash ({\cal
M}_{r}^{(2)})_{\mu,\nu,\lambda} \leftrightarrow {\cal G}_{\mu} \backslash
\Big\{ (D_{\mu}, N) : inv\,(N) = \nu, inv\,(D_{\mu}N) = \lambda \Big\}. \]

However, $(P,Q,T) \in {\cal G}_{\mu}$ if and only if
\begin{align*}
(P,Q,T) \cdot (D_{\mu},N) & = (PD_{\mu}Q^{-1}, QNT^{-1}) \\
& = (D_{\mu},QNT^{-1}), \end{align*} so that
\[ D_{\mu} = PD_{\mu}Q^{-1}, \]
and so
\[ P = D_{\mu}QD_{\mu}^{-1} \in GL_{r}(R). \]
\begin{df}
If $Q \in GL_{r}(R)$ satisfies
\[ D_{\mu}Q D_{\mu}^{-1} \in GL_{r}(R), \]
we shall say $Q$ is {\em $\mu$-admissible.}
\end{df}

Note first that the set of $\mu$-admissible matrices forms a group under
multiplication, so we shall denote this group by:
\[ G_{\mu} = \Big\{ Q \in GL_{r}(R) : \hbox{$Q$ is $\mu$-admissible} \Big\}. \]

 Note also that if $Q$ is
$\mu$-admissible, then $D_{\mu}QD_{\mu}^{-1} = P \in GL_{r}(R)$, so that
\[ Q = D_{\mu}^{-1}P D_{\mu}. \] Thus, if $q_{ij}$ denotes the $(i,j)$ entry of
$Q$, we must have
\[ \| q_{ij} \| \geq \mu_{j} - \mu_{i}, \ \ i > j. \]  In particular, if $Q = Q_{L}$ is
itself a lower triangular, $\mu$-admissible matrix, we may write
\[ Q_{L} = D_{\mu}^{-1} Q_{L}^{0}D_{\mu}, \]
for some invertible, lower triangular $Q_{L}^{0} \in GL_{r}(R)$.

Since $(P,Q,T) \in {\cal G}_{\mu}$ if and only if $P = D_{\mu}QD_{\mu}^{-1}$
for some $\mu$-admissible $Q$, it is sufficient, when seeking invariants of
orbits $GL_{r}(R)^3 \backslash ({\cal M}_{r}^{(2)})_{\mu,\nu,\lambda}$ to
consider
 the
natural bijection with the set of orbits:
\[ \left( G_{\mu} \times GL_{r}(R) \right) \backslash {\cal
M}_{\mu,\nu,\lambda}, \] where
\[ \quad {\cal M}_{\mu,\nu,\lambda} = \Big\{ N \in M_{r}(R) : inv\,(N) =
\nu, \ inv\,(D_{\mu}N) = \lambda \Big\}, \] and where we define the action of
$G_{\mu} \times GL_{r}(R)$ on ${\cal M}_{\mu,\nu,\lambda}$ by
\[ (Q,T)\cdot N = QNT^{-1}. \]

The substance of our results in this paper will be to find, in the orbit of $N$ under the group $G_{\mu} \times GL_{r}(R)$, a matrix in a special form that we will call ``$\mu$-generic".  As in our main example, we will use differences of orders of determinants of a $\mu$-generic matrix in the orbit of $N$ to define a \LR\ filling uniquely associated to this orbit.

Note:  In order to relate invariant factors to the partitions used
in Littlewood-Richardson tableaux, we restricted our attention to
full-rank matrices ${\cal M}_{r}^{(2)}$.  Some preliminary
investigations suggest that many of the matrix-theoretic results
presented here generalize to matrices over $R$ of arbitrary rank.
Extending the combinatorial interpretation to this case would
necessitate, it seems, considering diagrams with rows of
``infinite'' length (by regarding a $0$ among the invariant factors
as $0 = t^{\infty}$). Such a view may be possible and interesting,
but is not taken up here.

\medskip

In this Section we shall prove that an arbitrary matrix pair $(M,N)
\in {\cal M}_{r}^{(2)}$ is pair-equivalent to a pair $(D_{\mu},N^*)$
from which, as we will show in Section 5, a \LR\ filling may be
determined. We will begin by recording some definitions and
preliminary lemmas that will be used throughout the paper.

\begin{df}
 Let $I$, $J$, and $H$ be subsets of $ \{ 1,2,\ldots, r \}$ of length
$k$, written as $I = (i_{1}, i_{2}, \ldots , i_{k})$,  where $1 \leq i_{1} <
i_{2} < \cdots < i_{k} \leq r$, and similarly for $J$ and $H$. We call such
sets {\em index sets}.  (Note: $I,J$, and $H$ do {\em not} denote partitions.)
Let $I \subseteq H$ denote the condition that $i_{s} \leq h_{s}$ for $1 \leq s
\leq k$. Given an $r \times r$ matrix $W$, let \label{det-def}
\[ W_{IJ}=W\left( \begin{array}{cccc} i_{i} & i_{2} & \cdots & i_{k} \\
j_{1} & j_{2} & \cdots & j_{k} \end{array} \right) \] denote the $k \times k$
minor of $W$ using rows $I$ and columns $J$ (that is, the determinant of this
submatrix).  \label{mat-def} Let us extend the definition of $\|a\|$ to square
matrices, so that if $B$ is any square matrix, $\| B\|$ will denote $\| \det(B)
\|$.

Also, given a partition $\mu = (\mu_{1}, \ldots , \mu_{r})$, let $\mu_{I}$
denote the partition $\mu_{I}=(\mu_{i_{1}}, \mu_{i_{2}}, \ldots, \mu_{i_{k}})$,
and let $|\mu_{I}|= \mu_{i_{1}} + \mu_{i_{2}} + \cdots + \mu_{i_{k}}$.
\end{df}



We will also need the following result.  It provides one of the demonstrably
{\em least} efficient methods to compute an $LU$ decomposition of a matrix over
a field.  We shall apply it in our case to matrices over a discrete valuation
ring, so we should interpret the lemma below in terms of factorizations over
the field of fractions of $R$ (even though, in the cases that will be important
to us we will show that the factors are actually defined over the ring $R$).

\begin{lem}[\cite{Gant}, pp.\ 35-36.]
Every $r \times r$ matrix $A=(a_{ij})$ of rank $s$ in which the first $s$
successive principal minors are different from zero:
\[ D_{k} =  A \left( \begin{array}{c} 1 \ 2 \ \dots \ k \\
1 \ 2 \ \dots \ k \end{array} \right)  \neq 0, \quad \hbox{for $k=1,2, \ldots,
s$,} \] can be represented as a product of a lower-triangular matrix $B$ and an
upper-triangular matrix $C$:
\[ A = BC = \left[ \begin{array}{cccc}
b_{11} & 0 & \cdots & 0 \\
b_{21} & b_{22} & \ddots  & \vdots\\
\vdots & \vdots & \ddots & 0 \\
b_{r1} & b_{r2}& \cdots & b_{rr} \end{array} \right] \left[
\begin{array}{cccc}
c_{11} & c_{12} & \cdots & c_{1r} \\
0 & c_{22} & \cdots  & c_{2r} \\
\vdots & \ddots & \ddots & \vdots \\
0 & \dots & 0 & c_{rr} \end{array} \right] . \] Here
\[ b_{11}c_{11} = D_{1}, \ b_{22}c_{22} = \frac{D_{2}}{D_{1}}, \cdots
, b_{ss}c_{ss} = \frac{D_{s}}{D_{s-1}}. \] The values of the diagonal elements
of $B$ and $C$ can be chosen arbitrarily subject to the conditions above.

When the diagonal elements of $B$ and $C$ are given, then the elements in $B$
and $C$ are uniquely determined, and are given by the following formulas:
\[ b_{gk}=b_{kk}\frac{ A \left( \begin{array}{c} 1 \ 2 \
\dots \ k-1 \ g \\
1 \ 2 \ \dots \ k-1 \ k \end{array} \right)}{A \left(
\begin{array}{c} 1 \ 2 \ \dots \ k \\
1 \ 2 \ \dots \ k \end{array} \right) } , \quad c_{kg}= c_{kk}\frac{ A
\left( \begin{array}{c} 1 \ 2 \ \dots \ k-1 \ k \\
1 \ 2 \ \dots \ k-1 \ g \end{array} \right)  }{  A \left(
\begin{array}{c} 1 \ 2 \ \dots \ k \\
1 \ 2 \ \dots \ k \end{array} \right) } , \] for $k=1, 2, \ldots , s$, and
$g=k, k+1, \ldots , r$ .\label{gant}
\end{lem}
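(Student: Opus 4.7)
The plan is to derive the explicit formulas from the Cauchy--Binet identity (which simultaneously establishes uniqueness given the diagonal choices) and to prove existence by Gaussian elimination along the leading principal minors.

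Suppose $A=BC$ with $B$ lower triangular and $C$ upper triangular. Fix $k\leq s$ and $g\geq k$, and apply Cauchy--Binet to the minor $A(I;J)$ with $I=(1,\ldots,k-1,g)$ and $J=(1,\ldots,k)$:
\[
A(I;J)=\sum_{K} B(I;K)\,C(K;J),
\]
the sum ranging over $k$-element index sets $K$. Upper triangularity of $C$ forces $C(K;1,\ldots,k)=0$ unless $K=(1,\ldots,k)$, in which case $C(K;J)=c_{11}\cdots c_{kk}$. In $B(I;1,\ldots,k)$ the last column has all zero entries except possibly $b_{g,k}$ (by lower triangularity and $g\geq k$), so expansion along that column yields $B(I;J)=b_{g,k}\,b_{11}\cdots b_{k-1,k-1}$. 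Hence
\[
A(I;J)=b_{g,k}\prod_{i<k}b_{ii}\prod_{i\leq k}c_{ii}.
\]
Specializing to $g=k$ gives $D_k=\prod_{i\leq k}b_{ii}c_{ii}$, so $b_{kk}c_{kk}=D_k/D_{k-1}$; dividing the general identity by the $g=k$ case recovers the stated formula for $b_{g,k}$. The formula for $c_{k,g}$ comes from the symmetric computation using rows $(1,\ldots,k)$ and columns $(1,\ldots,k-1,g)$, with the roles of $B$ and $C$ interchanged.

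For existence, Gaussian elimination supplies the factorization. Starting from $A$, eliminate entries below the diagonal column by column: at step $k\leq s$ the pivot is the $(k,k)$ entry of the partially reduced matrix, which evaluates to $D_k/D_{k-1}$ and is nonzero by hypothesis. The multipliers used, together with the pivots and the resulting entries above the diagonal, assemble into the desired $B$ and $C$; comparing with the minor identities derived above confirms that the entries must be given by the stated formulas (which in turn forces the uniqueness statement). The principal obstacle is the terminal range $k>s$, where $D_k$ may vanish and the formulas no longer make sense. The rank-$s$ hypothesis rescues the argument: after the first $s$ elimination steps the remaining lower-right $(r-s)\times(r-s)$ Schur complement of the $s\times s$ leading block is identically zero, so the last $r-s$ rows of $C$ and columns of $B$ may be filled with zeros (or distributed among the free diagonal entries, as permitted by the lemma) to close out the factorization $A=BC$.
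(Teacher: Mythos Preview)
The paper does not supply its own proof of this lemma: it is quoted verbatim from Gantmacher \cite{Gant}, pp.\ 35--36, and is invoked only as background (in fact, it is never explicitly used in the subsequent arguments). So there is no in-paper proof to compare against.

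Your argument is correct and is essentially the classical one. The Cauchy--Binet step is clean: for $I=(1,\ldots,k-1,g)$ and $J=(1,\ldots,k)$, upper-triangularity of $C$ forces the intermediate index set to be $(1,\ldots,k)$, and lower-triangularity of $B$ reduces $B(I;1,\ldots,k)$ to $b_{gk}\prod_{i<k}b_{ii}$ by expansion along the last column. Dividing by the $g=k$ case gives the stated formula for $b_{gk}$, and the transposed computation gives $c_{kg}$. This simultaneously yields uniqueness once the diagonal products $b_{kk}c_{kk}=D_k/D_{k-1}$ are fixed. For existence, Gaussian elimination with pivots $D_k/D_{k-1}$ for $k\le s$ is the standard route, and the observation that the Schur complement of the leading $s\times s$ block vanishes (since $\operatorname{rank}A=s$) correctly handles the tail. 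One small notational slip: when you write ``$B(I;J)=b_{g,k}\,b_{11}\cdots b_{k-1,k-1}$'' you are using $J$ for the intermediate index set $K=(1,\ldots,k)$; this happens to coincide with your column set $J$, so no harm is done, but it would be clearer to keep them notationally distinct.
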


As we have seen, any matrix pair $(M,N) \in M_{r}^{(2)}$ is pair equivalent to
$(D_{\mu}, N')$, so it is sufficient to work with the action of the group
$G_{\mu} \times GL_{r}(R)$ (where $G_{\mu}$ denotes the group of
$\mu$-admissible matrices) acting on matrices $N$ via $(Q,T)\cdot N = QNT^{-1}$
for $(Q,t) \in G_{\mu} \times GL_{r}(R)$.  We shall prove the existence of a
matrix $N^*$ in the orbit of this action that is, in a sense to be made precise
below, ``generic'', and from which determinantal formulas similar to those in
our main example will allow us to compute a \LR\ filling associated to this
orbit, and hence to the pair $(M,N)$.  The following lemma will prove the
existence of this generic matrix, and the proposition to follow will establish
the key determinantal inequalities on which our method depends.

\begin{lem}
Let $N \in M_{r}(R)$ be full-rank.  Then there are matrices
$Q_{U},Q_{L},T_{L}$ and $T_{U}$ such that $Q_{U}$ and $Q_{L}$ are
$\mu$-admissible, $Q_{U}$ and $T_{U}$ are upper triangular, and
$Q_{L}$ is lower triangular and $T_{L}$ is a permutation matrix that
is multiplied on the right by a lower triangular matrix. We will set
\[ Q = Q_{U}Q_{L} \qquad \hbox{and} \qquad T^{-1} = T_{L}T_{U}. \]
(Note that $Q$ is written deliberately, and atypically in a ``UL''
decomposition.)  Then we may choose $Q_{U},Q_{L}T_{L}$ and $T_{U}$ subject to
the conditions above so that:
\begin{enumerate}
\item The ``$LU$'' factorization $Q =
\widehat{Q_{L}}\widehat{Q_{U}}$ exists and is defined over $R$, for
$\widehat{Q_{L}}$ and $\widehat{Q_{U}}$ lower and upper triangular,
$\mu$-admissible matrices, respectively,
\item $Q_{L}NT_{L}$, and hence $Q_{U}Q_{L}NT_{L}T_{U}$, is upper triangular,
\item For any index sets $I,J$ of length $k \leq r$:
\label{det-lem}
\begin{align}
\| (QNT^{-1})_{IJ} \| = \| (Q_{U}Q_{L}NT_{L}T_{U})_{IJ}\|
& = \min_{I \subseteq S} \|(Q_{L} N T^{-1})_{SJ} \| \label{first}\\
&  = \min_{H \subseteq I} \Big\{ \| (\widehat{Q_{U}}NT^{-1})_{HJ} \|
+ | \mu_{H} | - | \mu_{I} | \Big\}  \label{second} \\
& = \min_{H \subseteq J} \left\| (QNT_{L})_{IH} \right\|.
\label{third}
\end{align}
\end{enumerate}
\end{lem}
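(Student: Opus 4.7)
My plan is to construct $Q_L, T_L, Q_U, T_U$ in two stages and then derive the three determinantal identities from Cauchy-Binet applied to the triangularity of the outer factors. First I would construct $Q_L$ and $T_L$ so that $Q_L N T_L$ is upper triangular. This is a Bruhat/LPU-type factorization of $N$: since $N$ has full rank, one can choose a column permutation $P$ (incorporated into $T_L = PL$) so that the resulting matrix can be reduced to upper-triangular form by lower-triangular row operations from $Q_L$ and lower-triangular column operations from $L$. The $\mu$-admissibility condition on $Q_L$ is not obstructive here: for a lower-triangular matrix, $\|q_{ij}\| \ge \mu_j - \mu_i$ is vacuous when $i \ge j$ (since $\mu$ is non-increasing), and hence imposes no constraint on the reductions.

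Next I would choose $Q_U$ and $T_U$ upper triangular with unit diagonal entries and \emph{generic} off-diagonal entries in $R$. (An upper-triangular invertible matrix over $R$ is automatically $\mu$-admissible, because the shift by $D_\mu$ then only affects strictly-lower-triangular positions.) Setting $Q = Q_U Q_L$ and $T^{-1} = T_L T_U$ preserves upper-triangularity of $Q_U Q_L N T_L T_U$. The existence of the alternate factorization $Q = \widehat{Q_L}\widehat{Q_U}$ over $R$, with $\widehat{Q_L}$ lower- and $\widehat{Q_U}$ upper-triangular and $\mu$-admissible, is obtained by applying Lemma~\ref{gant} to $D_\mu Q D_\mu^{-1}$: once the leading principal minors of $Q$ are units of $R$ (a generic condition), Gantmacher's formulas yield triangular factors in $GL_r(R)$, and conjugating back by $D_\mu^{-1}$ gives $\widehat{Q_L}$ and $\widehat{Q_U}$, with the entries of $\widehat{Q_U}$ lying in $R$ because the required shifts are already absorbed by the orders of the relevant minors of $Q$.

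The three determinantal identities then follow from Cauchy-Binet. For (\ref{first}), write $QNT^{-1} = Q_U \cdot (Q_L N T^{-1})$; the expansion
\[
(QNT^{-1})_{IJ} \;=\; \sum_{S \supseteq I} (Q_U)_{IS}\,(Q_L N T^{-1})_{SJ}
\]
runs only over $S \supseteq I$ because $Q_U$ is upper-triangular, and by the generic choice of $Q_U$ each minor $(Q_U)_{IS}$ with $I \subseteq S$ is a unit of $R$. The ultrametric inequality then gives $\|(QNT^{-1})_{IJ}\| \ge \min_{S \supseteq I} \|(Q_L N T^{-1})_{SJ}\|$, and the reverse inequality follows because the same generic choice rules out catastrophic cancellation among the minimum-order summands. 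For (\ref{second}), write $QNT^{-1} = \widehat{Q_L}(\widehat{Q_U} N T^{-1})$; using $\widehat{Q_L} = D_\mu^{-1} Q_L^0 D_\mu$ with $Q_L^0 \in GL_r(R)$ lower triangular, factoring row and column scalars out of the determinant gives
\[
\|(\widehat{Q_L})_{IH}\| \;=\; |\mu_H| - |\mu_I| + \|(Q_L^0)_{IH}\|
\]
for $H \subseteq I$, with the last summand nonnegative and zero at $H = I$; the same genericity argument then converts the Cauchy-Binet inequality into equality. Identity (\ref{third}) is the mirror statement on the right, obtained from $NT^{-1} = (NT_L)T_U$ using the upper-triangularity of $T_U$.

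The main obstacle, and the heart of the argument, is controlling catastrophic cancellation in each Cauchy-Binet expansion: for every index pair $(I,J)$ of interest, the residues of the minimum-order summands must not sum to zero in the residue field. Each such cancellation is a proper Zariski-closed condition on the finitely many free entries of $Q_U, Q_L, T_U, L$, and since the residue field of $R$ has characteristic zero (hence is infinite), a simultaneous generic choice avoiding all these conditions exists and upgrades the ultrametric inequalities to the desired equalities.
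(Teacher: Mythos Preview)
Your overall strategy---Cauchy--Binet plus a genericity argument over the infinite residue field to kill catastrophic cancellation---is exactly the paper's approach. But there is a genuine error in your treatment of $Q_L$.

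You write that the $\mu$-admissibility condition on a lower-triangular matrix is vacuous because ``$\|q_{ij}\| \ge \mu_j - \mu_i$ is vacuous when $i \ge j$ (since $\mu$ is non-increasing).'' This has the direction reversed. For $i>j$ we have $\mu_j \ge \mu_i$, so $\mu_j-\mu_i \ge 0$ and the bound $\|q_{ij}\|\ge \mu_j-\mu_i$ is a \emph{genuine} constraint: the sub-diagonal entries of a $\mu$-admissible lower-triangular matrix must have strictly positive order whenever $\mu_j>\mu_i$. It is \emph{upper}-triangular invertible matrices that are automatically $\mu$-admissible, as you correctly note elsewhere. Consequently your Bruhat/LPU step, which uses arbitrary lower-triangular row operations in $Q_L$ to help triangularize $N$, does not in general produce a $\mu$-admissible $Q_L$, and the rest of the argument (in particular the factorization $\widehat{Q_L}=D_\mu^{-1}Q_L^0 D_\mu$ used for (\ref{second})) loses its footing.

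The paper avoids this by \emph{not} using $Q_L$ to triangularize at all. It first chooses $Q_L=D_\mu^{-1}Q_L^0 D_\mu$ with $Q_L^0$ a generically chosen lower-triangular matrix in $GL_r(R)$ (so all minors $(Q_L^0)_{IH}$ are units), and then defines $T_L$ as the column permutation followed by the lower-triangular column operations needed to make $Q_L N T_L$ upper triangular; since $Q_L N$ has full rank this is always possible and places no constraint on $Q_L$. With $Q_L$ built this way, the computation for (\ref{second}) genuinely needs $\|(\widehat{Q_L^0})_{IH}\|=0$ for \emph{every} $H\subseteq I$, not just $H=I$ as you assert; otherwise the term achieving the minimum on the right of (\ref{second}) could carry an unwanted extra positive order and the equality would fail. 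That uniform unit-minor condition is another open polynomial condition folded into the genericity requirements.
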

\begin{proof}Note that since $Q_{L}$ is required to be a lower triangular,
$\mu$-admissible matrix, there is a uniquely determined lower triangular
invertible matrix $Q_{L}^{0}$ such that $D_{\mu}^{-1}Q_{L}^{0}D_{\mu} = Q_{L}$.

For any matrix $W \in M_{r}(R)$, let us denote by $c_{*}(W)$ the
matrix taking values in the residue field of $R$ obtained by
applying $c_{*}$ to each entry of $W$.

Our method will be to determine, for all index sets $I$ and $J$, a collection
of finitely many polynomials in the entries of $c_{*}(Q_{L}^{0}),
c_{*}(Q_{U})$, $c_{*}(T_{L})$ and $c_{*}(T_{U})$ with coefficients determined
by the $c_{*}$ images of minors of $N$, such that if the $c_{*}$ images of the
entries of these matrices lie {\em outside} the variety defined by the common
solutions of these polynomials, Equations~\ref{first},~\ref{second}
and~\ref{third} will be satisfied. Since these polynomials will generate a
proper ideal and our residue field is infinite, the existence of a solution
will be assured.

We begin by establishing some facts obtainable for any choice of
matrices $Q_{U},Q_{L},T_{L}$ and $T_{U}$.  We will always denote $Q$
by the matrix $Q=Q_{U}Q_{L}$ and $T^{-1}$ by $T^{-1}=T_{L}T_{U}$.
Then, by the Cauchy-Binet formula we have:

\begin{align}
(QNT^{-1})_{IJ}  = (Q_{U}Q_{L}NT_{L}T_{U})_{IJ}    & = \sum_{I \subseteq S, H
\subseteq J} (Q_{U})_{IS} (Q_{L}NT_{L})_{SH}(T_{U})_{HJ}. \label{secondvar}
\end{align}

 Note that the conditions $I \subseteq S$
and $H \subseteq J$ are necessary since we require $Q_{U}$ and $T_{U}$ to be
upper triangular, and so these minors would vanish were these conditions not
satisfied.  We will first require that the entries in $Q_{U}$ and $T_{U}$ are
all units over $R$.  In fact, we may require that $(Q_{U})_{IS}, (T_{U})_{HJ}
\in R^{\times}$ for all $I,J,S,H$ since this amounts to requiring that the
$c_{*}$ images of $Q_{U}$ and $T_{U}$ lie outside the varieties $\det(
(c_{*}(Q_{U}))_{IS}=0$ and $\det((c_{*}(T_{U})_{HJ}=0$ for all index sets
$I,J,S,H$.  (In all, we shall make many successive ``requirements'' on the
matrices we discuss.  The point will be that all these requirements are open
polynomial conditions and so may be simultaneously met.)  The upshot of
Equation~\ref{secondvar} is now that $(QNT^{-1})_{IJ}$ may be written as a sum
of {\em unit} multiples of terms $(Q_{L}NT_{L})_{SH}$. We claim that we may
choose the units $(Q_{U})_{IS}$ and $(T_{U})_{HJ}$ to avoid any catastrophic
cancelation in the sum appearing in Equation~\ref{secondvar}.  In fact, we can
ensure that no catastrophic cancelation occurs in {\em any subset} of terms in
this sum. Let ${\cal S}$ be a subset of index sets of length $k$ such that $I
\subseteq S$, and similarly let ${\cal H}$ be a collection of index sets only
constrained by the condition $H \in {\cal H}$ implies $H \subseteq J$.  Let
$m_{0}$ be defined as
\[ m_{0} = \min_{S \in {\cal S}, H \in {\cal H}} \| (Q_{L}NT_{L})_{SH}
\| . \] Then, let us define the function $c_{*, m_{0}}: R
\rightarrow R/ (tR)$ by setting
\[ c_{*,m_{0}}(a) = \begin{cases} c_{*}(a/t^{m_{0}}) & \text{if $\|a
\| = m_{0}$} \\
0 & \text{otherwise} \end{cases}. \]

The existence of catastrophic cancelation in Equation~\ref{secondvar} may now
be expressed by the condition:
\begin{equation} \sum_{(S,H) \in {\cal M}}
c_{*}((Q_{U})_{IS})\, c_{*,m_{0}}((Q_{L}NT_{L})_{SH})\, c_{*}((T_{U})_{HJ}) =
0. \label{cat} \end{equation}

Given any $N$, along with fixed choice of $Q_{L}$ and $T_{L}$, we may certainly
choose matrices $Q_{U}$ and $T_{U}$ so that the units $c_{*}((Q_{U})_{IS})$ and
$c_{*}((T_{U})_{HJ})$ lie outside the variety defined by Equation~\ref{cat}
above.  We shall, in fact, require our $Q_{U}$ and $T_{U}$ to lie outside all
the varieties defined by the (finitely many) choices of index sets $I$ and $J$
and sets of index sets ${\cal S}$ and ${\cal H}$, so that there will be no
catastrophic cancelation among collections of terms appearing in any equation
of type of Equation~\ref{cat} above, once we choose $Q_{L}$ and $T_{L}$.  We
shall describe this as a {\em generic} choice of $Q_{U}$ and $T_{U}$, with
respect to some choice of $N$, $Q_{L}$ and $T_{L}$.

Let us choose a $\mu$-admissible $Q_{L}$ so that if we write $Q_{L} =
D_{\mu}^{-1}Q_{L}^{0}D_{\mu}$, we may assume all $\| (Q_{L}^{0})_{IH} \| = 0$,
which as before is a polynomially open condition.  Given $Q_{L}$,
let us define $T_{L}$ by the requirement that $Q_{L}NT_{L}$ is upper
triangular.  This, after a possible permutation of columns, is
obtainable by a lower triangular transformation.  With these fixed,
let us choose $Q_{U}$ and $T_{U}$ to be generic in the sense
described above.  Then, we may write

\begin{align}
(QNT^{-1})_{IJ}  = (Q_{U}Q_{L}NT_{L}T_{U})_{IJ}    & = \sum_{I
\subseteq S, H \subseteq J} (Q_{U})_{IS}
(Q_{L}NT_{L})_{SH}(T_{U})_{HJ}  \nonumber \\
& =\sum_{I \subseteq S} (Q_{U})_{IS}\Big( \sum_{H \subseteq
J}(Q_{L}NT_{L})_{SH}(T_{U})_{HJ} \Big) \nonumber  \\
& =\sum_{I \subseteq S} (Q_{U})_{IS} (Q_{L}NT^{-1})_{SJ}.
\end{align}

Since there can be no catastrophic cancelation among the terms
appearing above, the order of $(QNT^{-1})_{IJ}$ must be the minimum
of the orders of the $(Q_{L}NT^{-1})_{SJ}$, so Equation~\ref{first}
is satisfied.

We shall continue with Equation~\ref{second}.  Let us first note
that in order for the ``LU'' factorization $
Q=\widehat{Q_{L}}\widehat{Q_{U}}$ to exist for the matrix
$Q=Q_{U}Q_{L}$, it is sufficient that the principal minors of $Q$ be
units in $R$.  It is easy to show this is a polynomially open
condition on $c_{*}(Q_{U})$ and $c_{*}(Q_{L})$ (and noting that all
entries on and above the principal submatrices in the product $Q$
will be units), so we may ensure this factorization exists and is
defined over $R$.

Since $Q_{U}$ and $Q_{L}$ are $\mu$-admissible, so is the product
$Q=Q_{U}Q_{L}$, hence so is the product
$\widehat{Q_{L}}\widehat{Q_{U}}$.  Since every invertible upper
triangular matrix, such as $\widehat{Q_{U}}$, is automatically
$\mu$-admissible, it follows that $\widehat{Q_{L}}$ is
$\mu$-admissible as well, so we may find a lower triangular matrix
$\widehat{Q_{L}^{0}}$ such that
\[ \widehat{Q_{L}} = D_{\mu}^{-1} \widehat{Q_{L}^{0}} D_{\mu}. \]
 Since we require
 $\|(\widehat{Q_{L}^{0}})_{IH} \|=0$ to be satisfied for all $I$ and
 $H$, we may write:
\begin{align}
(QNT^{-1})_{IJ}  & = (\widehat{Q_{L}}\widehat{Q_{U}}NT^{-1})_{IJ} \nonumber \\
& = \sum_{H \subseteq I}
(\widehat{Q_{L}})_{IH} \Big((\widehat{Q_{U}}NT^{-1})_{HJ}\Big)  \nonumber \\
& = \sum_{H \subseteq I}
(\widehat{Q_{L}})_{IH} \Bigg(\sum_{S \subseteq J}(\widehat{Q_{U}}NT_{L})_{HS}(T_{U})_{SJ}\Bigg)  \label{expanded} \\
& =  \sum_{H \subseteq I} (D_{\mu}^{-1} \widehat{Q_{L}^{0}}
D_{\mu})_{IH} (\widehat{Q_{U}}NT^{-1})_{HJ}  \nonumber \\
& = \label{case}  \sum_{H \subseteq I} ( \widehat{Q_{L}^{0}} )_{IH}
(\widehat{Q_{U}}NT^{-1})_{HJ} \cdot t^{|\mu_{H}| - | \mu_{I}|}
 \end{align} So, we first
see $(QNT^{-1})_{IJ}$ expressed a sum in the form of Equation~\ref{expanded}, from which, by our previous reasoning, by a generic choice of $T_{U}$ we may ensure no catastrophic cancelation has occurred in the sum.  But then, the same terms appearing in the right side if Equation~\ref{expanded} are re-expressed in Equation~\ref{case}, and in this form we may conclude that Equation~\ref{second} may be satisfied.

In order to show that we may satisfy Equation~\ref{third},  we may
write
\begin{align}
 N^*_{IJ}  =  (QNT_{L}T_{U})_{IJ}  & =
 \sum_{H \subseteq J} (QNT_{L})_{IH}(T_{U})_{HJ} ,
\end{align} Again, since the minors $(T_{U})_{HJ}$ are uncoupled to
the other terms, we may ensure there is not catastrophic
cancelation, so that Equation~\ref{third} may be satisfied.
\end{proof}

\begin{df}
 Let us call a matrix pair $(D_{\mu}, N^*)$ a {\em $\mu$-generic} matrix pair associated to $N \in M_{r}(R)$ with respect to a
partition $\mu$ if we can factor $N^*$ as \[ N^* = QNT^{-1},
\] where $Q= Q_{L}Q_{U} = D_{\mu}^{-1}Q_{L}^{0}D_{\mu}Q_{U}$ is
$\mu$-admissible, $Q_{L}^{0}, Q_{U}$ are lower and upper triangular,
respectively, and $Q_{L},Q_{U}$ and $T^{-1}=T_{L}T_{U}$ satisfy
Equations~\ref{first},~\ref{second} and~\ref{third} of
Lemma~\ref{det-lem}.
  We shall simply say $N^*$ is {\em $\mu$-generic} if $N^*$ is a
$\mu$-generic matrix associated to some $N \in M_{r}(R)$.
\end{df}

It is from the $\mu$-generic $N^*$ that we will determine a
Littlewood-Richardson filling, and this matrix form appears to be of
some independent interest.  Before proceeding, we will require the
following technical result, which underpins the combinatorial
structure of our results.

\begin{prop}
Suppose $N^*$ is $\mu$-generic with respect to a matrix $N$. Then if
$I$ and $J$ are index sets of length $k$, for $k \leq r$, and $I
\subseteq H \subseteq J$, we have: \label{det-gap}
\begin{align}
\| N^*_{IJ} \| &\leq \| N^*_{HJ} \| \leq \| N^*_{IJ} \| +
|\mu_{I}|-|\mu_{H}|, \label{det-ineq}
\end{align}
and \begin{equation}  \| N^*_{IH} \| \geq \| N^*_{IJ} \|.
\label{col-ineq} \end{equation}
\end{prop}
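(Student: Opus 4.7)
The plan is to read off all three inequalities directly from the three formulas in Lemma~\ref{det-lem}, using the monotonicity of the indexing sets. Recall that the partial order $I \subseteq H$ means componentwise $i_s \leq h_s$, so when $I \subseteq H$ the inequalities $\mu_{i_s} \geq \mu_{h_s}$ give $|\mu_I| \geq |\mu_H|$; in particular the quantity $|\mu_I| - |\mu_H|$ in the statement is nonnegative.

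\textbf{Step 1 (left half of \eqref{det-ineq}).}  I would invoke Equation~\ref{first}:
\[
\| N^*_{IJ} \| = \min_{I \subseteq S} \| (Q_{L} N T^{-1})_{SJ} \|, \qquad \| N^*_{HJ} \| = \min_{H \subseteq S} \| (Q_{L} N T^{-1})_{SJ} \|.
\]
Since $I \subseteq H$, any $S$ with $H \subseteq S$ also satisfies $I \subseteq S$, so the second minimum is taken over a subcollection of the first.  The minimum over a smaller set can only be larger, giving $\| N^*_{IJ} \| \leq \| N^*_{HJ} \|$.

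\textbf{Step 2 (right half of \eqref{det-ineq}).}  Here I would use Equation~\ref{second}.  Choose $K_{0} \subseteq I$ attaining
\[
\| N^*_{IJ} \| = \| (\widehat{Q_{U}} N T^{-1})_{K_{0} J} \| + |\mu_{K_{0}}| - |\mu_{I}|.
\]
Because $I \subseteq H$, we also have $K_{0} \subseteq H$, so $K_{0}$ is admissible in the minimum computing $\| N^*_{HJ} \|$:
\[
\| N^*_{HJ} \| \;\leq\; \| (\widehat{Q_{U}} N T^{-1})_{K_{0} J} \| + |\mu_{K_{0}}| - |\mu_{H}| \;=\; \| N^*_{IJ} \| + |\mu_{I}| - |\mu_{H}|,
\]
where in the last step I substitute the expression for $\| N^*_{IJ} \|$ and the terms $|\mu_{K_{0}}|$ cancel.

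\textbf{Step 3 (column inequality \eqref{col-ineq}).}  This is again pure monotonicity, now applied to Equation~\ref{third}:
\[
\| N^*_{IJ} \| = \min_{K \subseteq J} \| (QNT_{L})_{IK} \|, \qquad \| N^*_{IH} \| = \min_{K \subseteq H} \| (QNT_{L})_{IK} \|.
\]
Since $H \subseteq J$, every $K \subseteq H$ satisfies $K \subseteq J$, so the minimum defining $\| N^*_{IH} \|$ is taken over a subset of that defining $\| N^*_{IJ} \|$; hence $\| N^*_{IH} \| \geq \| N^*_{IJ} \|$.

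There is essentially no obstacle here: all the real work was done in Lemma~\ref{det-lem}, where the three genericity-based minima were produced; the proposition is a formal consequence. The only care required is matching each desired inequality to the right one of the three formulas --- \eqref{first} gives the easy row-monotonicity bound, \eqref{third} gives its column analogue, and \eqref{second} is what lets one control the "shift" term $|\mu_{I}| - |\mu_{H}|$ in the row direction.
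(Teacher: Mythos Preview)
Your proof is correct and follows essentially the same route as the paper's: each of the three inequalities is derived from the corresponding formula of Lemma~\ref{det-lem} via the obvious monotonicity of a minimum under enlarging or shrinking the index set. The only cosmetic difference is that in Step~2 you phrase the argument by fixing a minimizer $K_0$, whereas the paper bounds the minimum over $\{S: S\subseteq H\}$ by the minimum over the smaller set $\{S:S\subseteq I\}$ and then separates out $|\mu_I|-|\mu_H|$; these are the same argument.
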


\begin{proof}
  Suppose $I \subseteq H \subseteq J$, so
that, in particular, $i_t \leq h_t \leq j_t$, for $1 \leq t \leq k$.
Let us use the $\mu$-generic condition and factor $N^*$ as:
$N^*=QNT^{-1}$, where $Q= Q_{L}Q_{U} = D_{\mu}^{-1}Q_{L}^{0}Q_{U} =
\widehat{Q}_{U}\widehat{Q}_{L}$ is $\mu$-admissible. Then, by the
Cauchy-Binet formula

\begin{align*}
\left\| N^{*}_{IJ} \right\| &= \left\|
(QNT^{-1})_{IJ} \right\| \\
&= \min_{I \subseteq S} \left\{ \left\| (Q_{L}NT^{-1})_{SJ} \right\|
\right\}, \qquad \qquad \qquad
\text{By Eq.~\ref{first} of Lemma~\ref{det-lem}}\\
& \leq \min_{H \subseteq S} \left\{ \left\| (Q_{L}NT^{-1})_{SJ}
\right\| \right\},  \qquad \qquad \qquad
\text{(since $I \subseteq H$)}\\
&= \left\| N^{*}_{HJ} \right\|.
\end{align*}

For the second inequality, we have:

\begin{align*} \|N^*_{HJ}\|& = \left\|
(QNT)_{HJ}  \right\|  \\
& = \min_{S\subseteq H} \left\{ |\mu_{S}| - | \mu_{H} |  + \left\|(
Q_{U}NT^{-1})_{SJ} \right\| \right\}, \qquad
\text{By Eq.\ref{second} of Lemma~\ref{det-lem}}\\
& \leq \min_{S\subseteq I} \left\{ |\mu_{S}| - | \mu_{H} |
+ \left\|( Q_{U}NT^{-1})_{SJ} \right\| \right\} , \quad \text{(since $I
\subseteq H$)} \\
& =\min_{S\subseteq I} \left\{ |\mu_{S}| - |\mu_{I}|  + \left\|( Q_{U}NT^{-1})_{SJ} \right\| \right\} + |\mu_{I}| -| \mu_{H} | \\
& = \left\| N^{*}_{IJ} \right\| +|\mu_{I}| -| \mu_{H} | .
\end{align*}

Lastly, for Inequality~\ref{col-ineq}, we have
\begin{align*} \| N_{IH}^* \| & = \| (QNT_{L}T_{U})_{IH} \| \\
& = \min_{S \subseteq H} \| (QNT_{L})_{IS} \| \\
& \geq \min_{S \subseteq J} \| (QNT_{L})_{IS}  \|, \quad
\text{(since $H \subseteq J$,)} \\
& = \| N_{IJ}^* \|.
\end{align*}
\end{proof}

The following corollary shows how we may easily determine which rows of a
$\mu$-generic matrix may be used to compute its invariant factors.

\begin{cor}  Suppose $N^*$ is a $\mu$-generic matrix such that
$inv\,(N^*)=(\nu_{1} \geq \nu_{2} \geq \dots \geq \nu_{r})$.  Then,
if $I_{s} = (1,2, \dots, s)$, $H_{(r-s)} = ((r-s+1), (r-s+2), \dots,
r)$, we have
\[ \| N_{I_{s}H_{(r-s)}}^* \| =
\nu_{r-s+1}+ \nu_{r-s+2} + \dots + \nu_{r} \quad
\hbox{and} \quad \| (D_{\mu}N^*)_{H_{(r-s)}H_{(r-s)}} \| =
\lambda_{r-s+1}+\lambda_{r-s+2}+\dots + \lambda_r,
\] where $\lambda=(\lambda_{1}, \dots, \lambda_r) = inv\,(D_{\mu}N^*)$.\label{row-inv}
\end{cor}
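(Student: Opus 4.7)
The plan is to identify the claimed minors as the extremal ones in the collection of all $s \times s$ minors, then invoke the Smith normal form to identify that extremal value with the sum of the smallest $s$ invariants. First I would set up the invariants: since $N^* = QNT^{-1}$ with $Q, T \in GL_{r}(R)$, we have $inv\,(N^*) = inv\,(N) = \nu$. By $\mu$-admissibility of $Q$ there is $P \in GL_{r}(R)$ with $D_{\mu}Q = P D_{\mu}$, so $D_{\mu}N^* = P(D_{\mu}N)T^{-1}$ and thus $inv\,(D_{\mu}N^*) = \lambda$. Then recall the standard Smith normal form fact: for a full-rank $r \times r$ matrix $W$ with invariant partition $(w_{1} \geq \cdots \geq w_{r})$, the minimum order over all $s \times s$ minors of $W$ equals $w_{r-s+1} + \cdots + w_{r}$ (the sum of the smallest $s$ invariants), since the gcd of $s \times s$ minors equals $t$ raised to that sum.

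For the first identity, I would observe that among index sets of length $s$, the set $I_{s} = (1, 2, \ldots, s)$ is componentwise minimal and $H_{(r-s)} = (r-s+1, \ldots, r)$ is componentwise maximal; hence for any $s \times s$ minor indexed by $I', J'$, we have $I_{s} \subseteq I'$ and $J' \subseteq H_{(r-s)}$. Apply the left half of (\ref{det-ineq}) with $I = I_{s}, H = I', J = J'$ to get $\|N^*_{I_{s}, J'}\| \leq \|N^*_{I', J'}\|$, and apply (\ref{col-ineq}) with $I = I_{s}, H = J', J = H_{(r-s)}$ to get $\|N^*_{I_{s}, H_{(r-s)}}\| \leq \|N^*_{I_{s}, J'}\|$. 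Thus $\|N^*_{I_{s}, H_{(r-s)}}\|$ is the minimum over all $s \times s$ minors, which by the Smith normal form remark equals $\nu_{r-s+1} + \cdots + \nu_{r}$.

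For the second identity, note that since $D_{\mu}$ is diagonal we have $(D_{\mu}N^*)_{IJ} = t^{|\mu_{I}|} N^*_{IJ}$, hence $\|(D_{\mu}N^*)_{IJ}\| = |\mu_{I}| + \|N^*_{IJ}\|$. I would minimize this over index sets of length $s$ in two stages. For fixed $I$, inequality (\ref{col-ineq}) shows the minimum over $J$ is attained at $J = H_{(r-s)}$. For fixed $J$, the right half of (\ref{det-ineq}) rearranges to $|\mu_{I'}| + \|N^*_{I', J}\| \leq |\mu_{I}| + \|N^*_{I, J}\|$ whenever $I \subseteq I'$, so the minimum over $I$ is attained at the componentwise largest $I$ of length $s$, which is $H_{(r-s)}$. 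Therefore $\|(D_{\mu}N^*)_{H_{(r-s)}, H_{(r-s)}}\|$ equals the global minimum of $\|(D_{\mu}N^*)_{IJ}\|$ over $s \times s$ minors, which by the Smith normal form fact applied to $D_{\mu}N^*$ equals $\lambda_{r-s+1} + \cdots + \lambda_{r}$.

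There is no real obstacle beyond bookkeeping: the content of the corollary is entirely the interaction between (i) componentwise extremality of $I_{s}$ and $H_{(r-s)}$, (ii) the monotonicity inequalities of Proposition~\ref{det-gap}, and (iii) the invariant-factor characterization of minors. The only place that needs care is keeping the direction of containments straight and recognizing that $|\mu_{I}| + \|N^*_{IJ}\|$, not $\|N^*_{IJ}\|$ alone, is what controls the invariants of $D_{\mu}N^*$.
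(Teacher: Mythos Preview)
Your proof is correct and follows essentially the same approach as the paper: both arguments identify each claimed minor as the minimum over all $s \times s$ minors by applying the row- and column-monotonicity inequalities of Proposition~\ref{det-gap} (the left half of~\eqref{det-ineq} together with~\eqref{col-ineq} for $N^*$, the right half of~\eqref{det-ineq} for $D_{\mu}N^*$), and then read off the sum of the smallest $s$ invariant factors from Smith normal form. Your write-up is simply more explicit than the paper's, in particular in writing out $\|(D_{\mu}N^*)_{IJ}\| = |\mu_{I}| + \|N^*_{IJ}\|$ and in verifying $inv\,(N^*)=\nu$, $inv\,(D_{\mu}N^*)=\lambda$ via the $\mu$-admissibility of $Q$.
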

\begin{proof} By construction, $I_{s} \subseteq I$ for any other index set $I$ of length
$s$.  Thus, by the first inequality appearing
Inequality~\ref{det-ineq} and Inequality~\ref{col-ineq} of
Proposition~\ref{det-gap}, $\|N_{I_{s}H_{(r-s)}} \|$, appearing in
the upper left corner, is minimal among the orders of all $s \times
s$ minors of $N^*$, so this order must be the sum of the smallest
$s$ invariant factors, from which the result follows.  The second
equality follows by noting that by right side of
Inequality~\ref{det-ineq} of Proposition~\ref{det-gap},  the orders
of minors of $D_{\mu}N^*$ must {\em increase} row index sets
decrease, so that now the {\em bottom} $s$ rows of $D_{\mu}N^*$ now
correspond to the smallest $s$ invariants, just as the {\em top} $s$
rows of $N^*$ did in the previous case.
\end{proof}

\section{Littlewood-Richardson Fillings From $\mu$-generic Matrix Pairs}

In this section we will show how to determine from a pair $(M,N)$ a
\LR\ filling of $\lambda / \mu$ with content $\nu$, when $inv\,(M)=
\mu$, $inv\,(N)= \nu$, and $inv\,(MN) = \lambda$.

\begin{df}
Suppose that $(D_{\mu},N^*)$ is a fixed $\mu$-generic pair in the
orbit of the given pair $(M,N) \in {\cal M}_{r}^{(2)}$. Let the
symbols
\[\left\| \Bigl( i_{1}, \ldots , i_s \Bigr) \right\|, \quad \left\| \Bigl( (i_{1})^{\wedge}, (i_2)^{\wedge}, \ldots , (i_k)^{\wedge}\Bigr) \right\|
 \] denote, respectively, the order of the minor
 of the $\mu$-generic matrix $N^*$ with rows $i_1, \dots, i_s$, and
 the right-most distinct columns possible.
Secondly, when using the ``\,$^{\wedge}$" symbol, the order of the
minor of $N^*$ whose rows include all rows $1$ through $r$ but with
the rows $i_1, i_2, \ldots, i_k$ omitted, again using the right-most
columns resulting in a square submatrix.
\end{df}

 We will omit the dependence of the above notation on the fixed $\mu$-generic
matrix $N^*$.

\begin{thm}
Let $(M,N) \in {\cal M}_{r}^{(2)}$ and suppose that $N^*$ is a $\mu$-generic
matrix associated to $N$. Let us define a triangular array of integers $\{
k_{ij} \}$, for $1 \leq i \leq r$, and $i \leq j \leq r$, by declaring
\begin{equation}
k_{1j} + k_{2j} + \dots + k_{ij} =\left\| \Bigl( (j-i)^{\wedge},
 \ldots ,(j-1)^{\wedge} \Bigr) \right\| - \left\| \Bigl(
(j-i+1)^{\wedge}, \ldots ,(j)^{\wedge} \Bigr)
\right\| .\\
\label{seq-def}
\end{equation}
Then, the set $F = \{ k_{ij} : 1 \leq i \leq r, \ i \leq j \leq r \}$ is a \LR\
filling of the skew shape $\lambda / \mu$ with content $\nu$, where
$inv\,(M)=\mu$, $inv\,(N) = \nu$, and $inv\,(MN)= \lambda$.  Equivalently,
setting
\begin{equation} \lambda_{j}^{(i)} = \mu_{j}+k_{1j} +
\dots + k_{ij} \label{lrseq} \end{equation}
 defines a \LR\ sequence of
type $(\mu,\nu; \lambda)$.\label{kij-lr}
\end{thm}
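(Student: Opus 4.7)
The plan is to verify the four conditions (LR1)--(LR4) of Definition~\ref{LR-def} in turn, using Corollary~\ref{row-inv} to evaluate certain boundary minors of $N^*$ in terms of $\lambda$ and $\nu$, and using Proposition~\ref{det-gap} to handle all intermediate comparisons. I would set
\[
P(i,j) := k_{1j}+\cdots+k_{ij} = \bigl\|\bigl((j-i)^\wedge,\ldots,(j-1)^\wedge\bigr)\bigr\| - \bigl\|\bigl((j-i+1)^\wedge,\ldots,j^\wedge\bigr)\bigr\|,
\]
with $P(0,j):=0$, so that $\lambda^{(i)}_j = \mu_j+P(i,j)$ and $k_{ij}=P(i,j)-P(i-1,j)$. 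The key computational input is the identity $\|(D_\mu N^*)_{IJ}\| = |\mu_I| + \|N^*_{IJ}\|$ (immediate because $D_\mu$ is diagonal), which combined with Corollary~\ref{row-inv} gives the "bottom-right" evaluation $\|N^*_{(j,\ldots,r),(j,\ldots,r)}\| = \sum_{s\ge j}(\lambda_s-\mu_s)$ and the "top-right" evaluation $\|N^*_{(1,\ldots,r-i),(i+1,\ldots,r)}\| = \sum_{s>i}\nu_s$.

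For (LR1), the row-sum identity $\sum_{i=1}^{j}k_{ij}=\lambda_j-\mu_j$ is just $P(j,j)$, and the formula telescopes directly to $\|N^*_{(j,\ldots,r),(j,\ldots,r)}\| - \|N^*_{(j+1,\ldots,r),(j+1,\ldots,r)}\|$, which by the evaluations above equals $\lambda_j-\mu_j$. For the content identity $\sum_{s=i}^r k_{is}=\nu_i$, summing the formula for $P(i,s)$ over $s \in [i,r]$ gives another telescoping whose two surviving terms are the top-right minor above (equal to $\sum_{s\ge i}(\lambda_s-\mu_s)$ by the $D_\mu$-trick) and a minor pinned down by Corollary~\ref{row-inv} to be $\sum_{s>i}\nu_s$; subtracting the analogous sum at $i-1$ then collapses to $\nu_i$.

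For (LR2), non-negativity $k_{ij}\ge 0$ is a four-term inequality among orders of minors whose row-omission sets are systematically shifted; I would obtain it by applying the row-monotonicity half of Proposition~\ref{det-gap}, namely $\|N^*_{IJ}\|\le\|N^*_{HJ}\|$ when $I\subseteq H$ coordinate-wise, to match the minors appearing in $P(i,j)$ with those in $P(i-1,j)$. For (LR3), rearranging $\lambda^{(i)}_j\le\lambda^{(i-1)}_{j-1}$ via $\lambda^{(i)}_j=\mu_j+P(i,j)$ turns it into $P(i,j)-P(i-1,j-1)\le\mu_{j-1}-\mu_j$, which is exactly the content of the upper estimate $\|N^*_{HJ}\|\le\|N^*_{IJ}\|+|\mu_I|-|\mu_H|$ in Proposition~\ref{det-gap}, the half that encodes $\mu$-admissibility.

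The hardest step will be (LR4), the word/Yamanouchi condition $\sum_{s=i+1}^{j+1}k_{i+1,s}\le\sum_{s=i}^j k_{is}$. After expanding both sides in terms of $P(\cdot,\cdot)$ and then in terms of minor orders, it becomes a comparison of two minors of $N^*$ at diagonally shifted positions. I expect to need both the row-monotonicity and the column-monotonicity halves of Proposition~\ref{det-gap} simultaneously, and possibly to use that $N^*$ is actually upper triangular (since $Q_L N T_L$ is, and $Q_U$, $T_U$ are). A fallback strategy, if a direct application does not close the argument, is an induction on $r$: peel off the topmost row or leftmost column of $N^*$ and verify that the induced $(r-1)\times(r-1)$ matrix is $\mu'$-generic for the truncated partition. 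Throughout, one delicate bookkeeping point is the edge case $j=i$, where the leading $(0)^\wedge$ is interpreted as omitting no row so that the two minors appearing in $P(j,j)$ have different sizes.
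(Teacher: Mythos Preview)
Your plan for (LR1) is essentially the paper's: both use Corollary~\ref{row-inv} after telescoping, and your identification of the two boundary evaluations is correct (modulo some care with the edge case $(0)^\wedge$, which the paper handles via the convention on empty omission sets).

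The gap is in (LR2)--(LR4). You propose to obtain each of these from Proposition~\ref{det-gap} alone, but that proposition only gives \emph{two-term} comparisons $\|N^*_{IJ}\|\le\|N^*_{HJ}\|\le\|N^*_{IJ}\|+|\mu_I|-|\mu_H|$ between minors of the \emph{same size}. The conditions (LR2)--(LR4), once unpacked via your $P(i,j)$, are genuinely \emph{four-term} inequalities involving minors of two different sizes. For instance, $k_{ij}\ge0$ becomes $A+D\ge B+C$ with $A,B$ of size $r-i$ and $C,D$ of size $r-i+1$; row-monotonicity yields $A\ge B$ and $C\ge D$ separately, and the upper estimate gives $A-B\le\mu_{j-i}-\mu_j$ and $C-D\le\mu_{j-i+1}-\mu_j$, but none of these combinations forces $A-B\ge C-D$. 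Your claim that (LR3) is ``exactly the content of the upper estimate'' has the same problem: $P(i,j)-P(i-1,j-1)$ involves minors with different column sets (rightmost $r-i$ versus rightmost $r-i+1$ columns), so the hypothesis $I\subseteq H\subseteq J$ with fixed $J$ in Proposition~\ref{det-gap} does not apply.

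The paper supplies the missing ingredient: since $N^*$ is upper triangular, each minor appearing in the inequality factors in block form, and after cancelling the common lower-right diagonal blocks the four-term inequality reduces to $\beta_1\le\alpha_1$, where $\beta_1$ and $\alpha_1$ are the largest invariant factors of two nested submatrices $S\subseteq T$ of $N^*$. This last step is the \emph{interlacing inequality} for invariant factors (cited from \cite{carlson-sa}, \cite{sa}, \cite{Thompson}), which is external to Proposition~\ref{det-gap}. All three of (LR2), (LR3), (LR4) are proved by this same block-plus-interlacing template; in (LR3) one intermediate application of Proposition~\ref{det-gap} is used to shift a row index from $j$ to $j-1$ before interlacing can be invoked. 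Your induction fallback for (LR4) would likely run into the same obstruction, since the inequality you need at size $r$ does not obviously reduce to the same inequality at size $r-1$ without a comparison of top invariant factors somewhere.
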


The formula in Equation~\ref{seq-def} allows us to define the size of row $j$
in the partition $\lambda^{(i)}$ of (what we shall prove to be) a \LR\
sequence. Since our notation for omitted indices in determinants is only to be used when removing a non-empty increasing sequence of indices, we will adopt the convention that
\[ \| (p)^{\wedge} , \dots, (q)^{\wedge} \| = \| 1, 2, \dots, r \| \qquad \hbox{if $p>q$.} \]  With this, we can use Equation~\ref{seq-def} above to
define the individual entries $k_{ij}$, according to the formula

\begin{multline}
k_{ij} =\left\| \Bigl( (j-i)^{\wedge}, (j-i+1)^{\wedge}, \ldots ,
(j-1)^{\wedge} \Bigr) \right\| - \left\| \Bigl( (j-i+1)^{\wedge}, \ldots
,(j)^{\wedge} \Bigr)
\right\| \\
- \Biggl( \left\| \Bigl( (j-i+1)^{\wedge}, (j-i+2)^{\wedge}, \ldots ,
(j-1)^{\wedge} \Bigr) \right\| - \left\| \Bigl( (j-i+2)^{\wedge}, \ldots ,
(j)^{\wedge} \Bigr) \right\| \Biggr).\label{kij-def}
\end{multline}

Note that all the determinants above have the same form.  Namely, they all have
a single, consecutive sequence of rows removed.  We can actually give a
combinatorial meaning to the orders of these determinants. For example, suppose $r=5$, and let us arrange the
integers in a \LR\ filling in a triangular array:
\[ \begin{array}{ccccc} k_{11} & & & & \\ k_{12}&k_{22}& & & \\
k_{13}& k_{23} & k_{33} & & \\k_{14} &k_{24} & k_{34} & k_{44} & \\ k_{15} &
k_{25}& k_{35} & k_{45} & k_{55} .
\end{array} \]

Our interpretation will tell us how to remove terms from this array, so that
the order of our determinant equals the sum of the remaining terms.  For
example, in the determinant
\[ \left\| \Bigl( (4-2)^{\wedge}, (4-1)^{\wedge}
 \Bigr) \right\| = \left\| \Bigl( (2)^{\wedge}, (3)^{\wedge} \Bigr) \right\|, \]
we will read from the right to the left, so we begin with the omitted row 3.
This will denote that we first {\em remove} the $k_{ij}$'s appearing in the
first {\em three} rows of the array, starting in the first row.  The next $2$
will then denote that we {\em remove} the $k_{ij}$'s appearing in the first
{\em two} rows in which they appear (that is, starting in the second row).
Thus, the array associated to the determinant above is:

\[ \left\| \Bigl(
 (2)^{\wedge}, (3)^{\wedge} \Bigr)
\right\| \Rightarrow  \begin{array}{ccccc} \begin{array}{c} (3)^{\wedge} \\
\hbox{\fbox{ $\begin{array}{c} k_{11} \\
k_{12} \\ k_{13} \end{array}$}}  \end{array}& \begin{array}{c} (2)^{\wedge} \\ \hbox{\fbox{ $\begin{array}{c}  \\
k_{22} \\ k_{23} \end{array}$}} \end{array} & \begin{array}{c} \\ \hbox{ $\begin{array}{c}  \\
\\ k_{33} \end{array}$} \end{array} & & \\
 k_{14} & k_{24}& k_{34} & k_{44} & \\ k_{15} & k_{25} & k_{35} & k_{45} & k_{55} \end{array}  \quad
=  \begin{array}{ccccc}\_ & & & & \\ \_&\_& & & \\
\_& \_ & k_{33} & & \\k_{14} &k_{24} & k_{34} & k_{44} & \\ k_{15} & k_{25}&
k_{35} & k_{45} & k_{55}
\end{array}\]

We claim (and will subsequently show), that the order of the determinant $\left\| \Bigl( (2)^{\wedge}, (3)^{\wedge}\Bigr)\right\|$ equals the sum of the $k_{ij}$'s in the right-hand side of the above picture, where (as we shall also show), the integers so defined form a \LR\ filling of $\lambda / \mu$ with content $\nu$.

 Similarly, in the determinant in which we omit rows 3 and 4 we associate the
array
\[ \left\| \Bigl(
 (3)^{\wedge}, (4)^{\wedge} \Bigr)
\right\| \Rightarrow  \begin{array}{ccccc}
 \begin{array}{c} (4)^{\wedge} \\
\hbox{\fbox{ $\begin{array}{c} k_{11} \\
k_{12} \\ k_{13} \\ k_{14} \end{array}$}}  \end{array}
&
\begin{array}{c} (3)^{\wedge} \\ \hbox{\fbox{ $\begin{array}{c}  \\
k_{22} \\ k_{23} \\ k_{24} \end{array}$}} \end{array}
&
\begin{array}{c} \\ \hbox{ $\begin{array}{c}  \\
\\ k_{33} \\ k_{34} \end{array}$} \end{array}
& \hbox{ $\begin{array}{c}  \\ \\
\\  \\ k_{44} \end{array}$}
& \\
 k_{15} & k_{25} & k_{35} & k_{45} & k_{55}
\end{array}  \quad
=  \begin{array}{ccccc}\_ & & & & \\ \_&\_& & & \\
\_& \_ & k_{33} & & \\\_ &\_ & k_{34} & k_{44} & \\ k_{15} & k_{25}& k_{35} &
k_{45} & k_{55}
\end{array}\]

Consequently, we associate to the difference of orders of determinants the
array: \begin{align*} \left\| \Bigl( (2)^{\wedge},(3)^{\wedge} \Bigr) \right\|
- \left\| \Bigl(
 (3)^{\wedge},(4)^{\wedge} \Bigr)
\right\| &     \Rightarrow     \begin{array}{ccccc} \_ & & & & \\ \_
& \_ & & & \\
\_ & \_&k_{33} & & \\ k_{14} & k_{24}& k_{34} & k_{44} & \\ k_{15} & k_{25} &
k_{35}& k_{45} & k_{55}
\end{array} -
\quad \begin{array}{ccccc} \_ & & & & \\ \_ & \_ & & & \\
\_ & \_& k_{33} & & \\ \_ & \_& k_{34} & k_{44} & \\ k_{15} & k_{25} & k_{35} & k_{45} & k_{55} \end{array}      \\
& = \begin{array}{ccccc} \_ & & & & \\ \_ & \_ & & & \\
\_ & \_& \_ & & \\ k_{14}&k_{24}& \_&\_ & \\ \_ & \_ & \_ & \_ & \_ \end{array}
\quad = k_{14}+k_{24},
\end{align*}
which is just the form of Equation~\ref{seq-def} when $r=5$, $j=4$, and $i=2$.

\medskip

The study of the structure of \LR\ fillings in the form of the integers $\{
k_{ij}\}$ can be found in a variety of contexts (see \cite{knut} and
\cite{pak}).  What we find is that these fillings do more than {\em count},
they {\em explain} how the invariant factors of one matrix are distributed with
respect to another.

\noindent Let us now show these interpretations are justified by proving
Theorem~\ref{kij-lr}.

\medskip

\begin{proof}
We shall, in turn, prove (LR1), (LR2), (LR3), and (LR4) of
Definition~\ref{LR-def} for the set of integers $F= \{ k_{ij} \}$
defined by Equation~\ref{kij-def}.

\medskip

\noindent {\bf(LR1)}  We need to show:
\begin{eqnarray}
\sum_{s=1}^{j} k_{sj}& =& \lambda_{j} - \mu_{j}, \quad 1 \leq j \leq r\label{lambda-sum}\\
\sum_{s=i}^{r}k_{is}& = & \nu_{i}.\quad 1 \leq i \leq r
\label{nu-sum}\end{eqnarray}

Using Equation~\ref{lrseq} we see Equation~\ref{lambda-sum} is just the
requirement that $\lambda^{(r)} = \lambda = inv\,(MN)$.  We claim it will be
sufficient to prove:
\begin{equation} \left\| \Bigl( (j-r)^{\wedge}, (j-r+1)^{\wedge}, \ldots ,
(j-2)^{\wedge},(j-1)^{\wedge} \Bigr) \right\| = (\lambda_{j} -\mu_{j}) +
(\lambda_{j+1} - \mu_{j+1}) + \cdots + (\lambda_{r}-\mu_{r}),
\label{lam-sum}\end{equation} for $j=1, \ldots , r$. This is because, on the
one hand, the right side of Equation~\ref{lambda-sum} is obtained by taking the
difference of the right sides of Equation~\ref{lam-sum}, first using $j$ as
above, and then replacing $j$ with $j+1$, but then, on the other hand, noting
that the corresponding differences on the left side of Equation~\ref{lam-sum}
gives us the right side of Equation~\ref{seq-def}, from which the result
follows. The claim in Equation~\ref{lam-sum}, however, follows from the second
part of Corollary~\ref{row-inv}.

Before proving Equation~\ref{nu-sum}, we will need here (and later) the following
lemma, which is really just a consequence of the telescoping of terms first
noted in our main example.  If Equation~\ref{seq-def} shows us how to compute the sum of the $k_{pq}$'s along a given {\em row} of a proposed \LR\ filling, then the following lemma shows how to compute a {\em block} of $k_{pq}$'s, for $1 \leq p \leq i$ and $j \leq q \leq l$, along with the sum of the $k_{iq}$'s for $i \leq q \leq j$.

\begin{lem} With $k_{ij}$ defined by Equation~\ref{kij-def} for all $1 \leq j \leq r$ and $1 \leq i \leq j$, we have
\begin{enumerate}
\item $\sum_{\beta = j}^{l}  (k_{1 \beta} + k_{2 \beta} + \cdots + k_{i \beta}) =
\| (j-i)^{\wedge} \dots (j-1)^{\wedge} \| - \| (l - i + 1)^{\wedge} \dots (l)^{\wedge} \|,$ for $j \leq l$.
\item $k_{ii} + k_{i,(i+1)} + \cdots + k_{ij} = \| (j-i+2)^{\wedge} \dots (j)^{\wedge} \| - \| (j-i+1)^{\wedge} \dots (j)^{\wedge} \|$.
    \end{enumerate}\label{word-lem}
\end{lem}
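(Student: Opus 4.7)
The plan is to prove both identities as pure telescoping manipulations of the defining Equation~\ref{seq-def}; no further matrix-theoretic input is needed. Part 1 is a direct telescoping; Part 2 follows by applying Part 1 twice and subtracting.

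For Part 1, I would observe that in Equation~\ref{seq-def} the second determinant $\|(j-i+1)^{\wedge},\ldots,(j)^{\wedge}\|$ is obtained from the first determinant $\|(j-i)^{\wedge},\ldots,(j-1)^{\wedge}\|$ under the index shift $j\mapsto j+1$. Setting
\[ f(\beta) := \|(\beta-i)^{\wedge},\ldots,(\beta-1)^{\wedge}\|, \]
Equation~\ref{seq-def} reads $k_{1\beta}+\cdots+k_{i\beta} = f(\beta)-f(\beta+1)$, so summing over $\beta=j,j+1,\ldots,l$ telescopes to $f(j)-f(l+1)$, which is exactly the right-hand side of Part 1.

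For Part 2, I would apply Part 1 twice with $j_0=i$ and $l=j$: once with upper index $i$ and once with upper index $i-1$. Subtracting eliminates the terms $k_{p\beta}$ with $p\le i-1$ and leaves $\sum_{\beta=i}^{j}k_{i\beta}$ on the left. Writing $A(p,q):=\|(p)^{\wedge},\ldots,(q)^{\wedge}\|$ for brevity, the right-hand side becomes
\[ \bigl[A(0,i-1)-A(1,i-1)\bigr] + \bigl[A(j-i+2,j)-A(j-i+1,j)\bigr]. \]
The second bracket is precisely the target expression; the first bracket vanishes because the rows of $N^{*}$ are indexed by $\{1,\ldots,r\}$, so ``omit rows $0,1,\ldots,i-1$'' and ``omit rows $1,\ldots,i-1$'' designate the same submatrix.

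The only delicate point, and the main obstacle to a clean write-up, is this boundary convention for out-of-range indices. It should be explicitly recorded as extending the convention stated just before Equation~\ref{kij-def} (where descending index sequences are legislated to yield the full determinant $\|1,2,\ldots,r\|$). In particular, for the edge case $i=1$ of Part 2, Part 1 yields $A(0,0)-A(j,j)$, and identifying $A(0,0)=\|1,2,\ldots,r\|$ matches the Part 2 formula $A(j+1,j)-A(j,j)$ under the paper's stated convention. Beyond this bookkeeping, both claims are formal consequences of telescoping.
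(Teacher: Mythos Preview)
Your proof is correct and follows essentially the same approach as the paper's: Part~1 is obtained by telescoping Equation~\ref{seq-def} over consecutive rows, and Part~2 by subtracting the instance of Part~1 with upper index $i$ from the instance with upper index $i-1$ (both starting at row $i$ and ending at row $j$), then handling the boundary term via the omitted-index convention. Your treatment of the boundary case---noting that $A(0,i-1)=A(1,i-1)$ because omitting a nonexistent row~$0$ is vacuous, and that this extends the paper's $p>q$ convention---is exactly the point the paper flags parenthetically for $i=1$, only made more explicit.
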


\begin{proof}
The first equality is an immediate consequence of noting the telescoping of terms in Equation~\ref{seq-def} applied to successive rows.  The second equality follows from calculating the difference between an instance of the first equality ending with $k_{ij}$,  subtracting an instance ending with $k_{(i-1),j}$, and then canceling terms (Note that when $i=1$ that these formulas still make sense, using our convention concerning the meaning of omitted indices.)
\end{proof}

\medskip

So, to prove Equation~\ref{nu-sum}, we apply the second equality in Lemma~\ref{word-lem} in the case
$j =r$, and obtain
\begin{align*}
 k_{ii}+ \dots + k_{ir} & = \| ((r-i+2)^{\wedge} , \dots , (r)^{\wedge} )\| - \|
((r-i+1)^{\wedge}, \dots , (r)^{\wedge}) \| \\
& = \| (1, \dots , (r-i+1) )\| - \| (1, \dots, (r-i) ) \| \\
&= (\nu_{i} + \cdots + \nu_{r} ) - (\nu_{(i+1)} + \dots + \nu_{r}) \\
& =\nu_i ,
\end{align*}
where the penultimate equality follows from the first part of
Corollary~\ref{row-inv}.

\medskip
The proofs for (LR2), (LR3) and (LR4) are surprisingly similar, and all depend
on computing minors of $N^*$ with explicit submatrices on which we may put the
matrix into a convenient block form from which the determinant may be computed.

\medskip

\noindent {\bf (LR2)} Let us re-write the condition $k_{ij} \geq 0$,
using
Equation~\ref{kij-def}, but expressed positively
(in terms of rows that are
kept instead of omitted), as:
\begin{multline} \left\| \Bigl( 1, \ldots , (j-i),j,\ldots, r \Bigr)
\right\|
+ \left\| \Bigl( 1, \ldots , (j-i),(j+1), \ldots ,r \Bigr) \right\| \leq \\
\left\| \Bigl( 1, \ldots , (j-i-1), j, \ldots , r \Bigr) \right\| + \left\|
\Bigl( 1, \ldots , (j-i+1), (j+1), \ldots , r \Bigr) \right\|. \label{kij-ineq}
\end{multline}

Each minor starts on the right in column $r$, and uses consecutive
columns
 as we proceed to the left.  So, for example, the first term
\[ \left\| \Bigl( 1, \ldots , (j-i),j,\ldots, r \Bigr) \right\|, \]
would use columns $i$ to $r$.  Recall that since $N^*$ is
$\mu$-generic, it is upper triangular.  By a slight abuse of
notation, we will will use our notation for the minor (a
determinant) to denote a submatrix in order to re-express the above
in block form as:

\[ \begin{bmatrix}
N^*
\begin{pmatrix} 1 \ 2 \ \dots  \ (j-i) \\
i \ (i+1) \ \dots  \ (j-1) \end{pmatrix} & N ^*
\begin{pmatrix} 1 \ 2 \ \dots  \ (j-i) \\
j \ (j+1) \ \dots  \ r \end{pmatrix} \\
0 & N^*
\begin{pmatrix} j \ (j+1) \ \dots  \ r \\
j \ (j+1) \ \dots  \ r \end{pmatrix} \end{bmatrix} ,\]

Thus, we can express Inequality~\ref{kij-ineq} in block-form as:
\begin{multline*}
\begin{Vmatrix} N^*
\begin{pmatrix} 1 \ \dots  \ (j-i) \\
i \ \dots  \ (j-1) \end{pmatrix} & N^*
\begin{pmatrix} 1  \dots  (j-i) \\
j  \dots  r \end{pmatrix}  \\
0 & N^*
\begin{pmatrix} j  \dots  r \\
j  \dots  \ r \end{pmatrix} \end{Vmatrix} + \begin{Vmatrix} N^*
\begin{pmatrix} 1 \dots   (j-i) \\
(i+1)  \dots   j \end{pmatrix} & N^*
\begin{pmatrix} 1  \dots  (j-i) \\
 (j+1)  \dots  r \end{pmatrix}  \\
0 & N^*
\begin{pmatrix} (j+1)  \dots   r \\
(j+1)  \dots   r \end{pmatrix} \end{Vmatrix} \leq \\
\qquad \begin{Vmatrix} N^*
\begin{pmatrix} 1 \dots  (j-i-1)  \\
(i+1)  \dots   (j-1) \end{pmatrix} & N^*
\begin{pmatrix} 1 \dots  (j-i-1) \\
j  \dots   r \end{pmatrix}  \\
0 & N^*
\begin{pmatrix} j  \dots  r \\
j  \dots   r \end{pmatrix} \end{Vmatrix} + \begin{Vmatrix} N^*
\begin{pmatrix} 1  \dots  (j-i+1) \\
i  \dots  j \end{pmatrix} & N^*
\begin{pmatrix} 1  \dots  (j-i+1) \\
 (j+1) \dots   r \end{pmatrix}  \\
0 & N^*
\begin{pmatrix} (j+1)  \dots   r \\
(j+1)  \dots   r \end{pmatrix} \end{Vmatrix} .
\end{multline*}

Since the orders of the determinants above are the sums of the orders of the
determinants of their block diagonals, we may cancel the orders of the
south-east blocks in the above inequality, so that it is sufficient to prove:
 \begin{multline*}
\underbrace{\begin{Vmatrix} N^*
\begin{pmatrix} 1 \ \dots  \ (j-i) \\
i \ \dots  \ (j-1) \end{pmatrix} \end{Vmatrix}}_{S} +
\underbrace{\begin{Vmatrix} N^*
\begin{pmatrix} 1 \dots   (j-i) \\
(i+1)  \dots   j \end{pmatrix} \end{Vmatrix}}_{T_{1}} \leq \\
\underbrace{\begin{Vmatrix} N^*
\begin{pmatrix} 1 \dots  (j-i-1)  \\
(i+1)  \dots   (j-1) \end{pmatrix}\end{Vmatrix}}_{S_{1}} +
\underbrace{\begin{Vmatrix} N^*
\begin{pmatrix} 1  \dots  (j-i+1) \\
i  \dots  j \end{pmatrix} \end{Vmatrix}}_{T} . \end{multline*}

Notice that by Proposition~\ref{det-gap}, a submatrix of $N^*$ will
satisfy the same determinantal inequalities as does the full matrix,
so that Corollary~\ref{row-inv} will still apply. So, since the
submatrix $S_1$ is the upper right corner of $S$, if $inv\,(S) = (
\beta_{1} \geq \beta_{2} \geq \cdots \geq \beta_{j-i} )$, then
$inv\,(S_{1}) = ( \beta_{2} \geq \beta_{2} \geq \cdots \geq
\beta_{j-i} )$. Similarly, if $inv\,(T) = ( \alpha_{1} \geq
\alpha_{2} \geq \cdots \geq \alpha_{j-i+1} )$ then $inv\,(T_1) = (
\alpha_{2} \geq \cdots \geq \alpha_{j-i+1} )$. Substituting this
into the above, we see that in order to prove
Inequality~\ref{kij-ineq} it is sufficient to establish $\beta_{1}
\leq \alpha_{1}$.  This, however, follows from noting that matrix
$S$ is a $(j-i) \times (j-i)$ submatrix of $T$ (in rows $1$ through
$(j-i)$), and hence the highest invariant factor of $S$ (that is,
$\beta_{1}$) is bounded by the highest invariant factor of $T$
(namely, $\alpha_{1}$), by the so-called ``interlacing''
inequalities of invariant factors, as found in, for instance,
\cite{carlson-sa}, and also~\cite{sa},~\cite{Thompson}.

\medskip

 {\bf (LR3)}  The column strictness condition (LR3) asserts that
 in a \LR\ filling of $\lambda / \mu$,
the sum of the number of $1$'s through $i$'s appearing in row $j$ of the skew
shape must not extend beyond the sum of the number of $1$'s through $(i-1)$'s
appearing in row $(j-1)$.  That is,
\begin{equation} k_{1j} + \cdots k_{ij} + \mu_{j} \leq k_{i,(j-1)} + \cdots
+ k_{(i-1),(j-1)} + \mu_{(j-1)}. \label{cs}
\end{equation}

The sums of the $k_{pq}$'s appearing in both sides of Inequality~\ref{cs} can
be expressed using Equation~\ref{seq-def}. As before, we can write this
inequality in terms of blocks of matrices with right-justified columns.  In
this case, we partition all the matrices appearing at row/column $(j+1)$, so
that we can cancel the orders of the determinants of these lower blocks.  Thus,
in order to prove Inequality~\ref{cs} it will be sufficient to prove

\begin{multline*}
 \underbrace{\left\| N^*
\begin{pmatrix} 1 \dots   (j-i) \ j \\
i  \dots  (j-1) \  j \end{pmatrix} \right\|}_{S} +
\underbrace{\left\| N^*
\begin{pmatrix} 1  \dots   (j-i-1) \ j \\
(i+1)   \dots  (j-1)\ j \end{pmatrix} \right\|}_{T_1} \leq \\
 \underbrace{\left\| N^*
\begin{pmatrix} 1  \dots  (j-i) \\
(i+1)  \dots  j \end{pmatrix} \right\|}_{S_{1}}  +
\underbrace{\left\| N^*
\begin{pmatrix} 1 \dots  (j-i-1) \ (j-1)\ j \\
i  \dots  (j-2)  \ (j-1) \ j \end{pmatrix} \right\|}_{T} + \mu_{(j-1)} -
\mu_{j}.
\end{multline*}

By Proposition~\ref{det-gap} we have
\[ \underbrace{\left\| N^*
\begin{pmatrix} 1  \dots   (j-i-1) \ j \\
(i+1)   \dots  (j-1)\ j \end{pmatrix} \right\|}_{T_1} \leq
\underbrace{\left\| N^*
\begin{pmatrix} 1  \dots   (j-i-1) \ (j-1) \\
(i+1)   \dots  (j-1)\ j \end{pmatrix} \right\|}_{T_{1}^*} + \mu_{(j-1)} -
\mu_{j},
\] So, by substituting into the
above we see it is sufficient to prove
\begin{multline*}
 \underbrace{\left\| N^*
\begin{pmatrix} 1 \dots   (j-i) \ j \\
i  \dots  (j-1) \  j \end{pmatrix} \right\|}_{S} +
\underbrace{\left\| N^*
\begin{pmatrix} 1  \dots   (j-i-1) \ (j-1) \\
(i+1)   \dots  (j-1)\ j \end{pmatrix} \right\|}_{T^*_1} \leq \\
 \underbrace{\left\| N^*
\begin{pmatrix} 1  \dots  (j-i) \\
(i+1)  \dots  j \end{pmatrix} \right\|}_{S_{1}}  +
\underbrace{\left\| N^*
\begin{pmatrix} 1 \dots  (j-i-1) \ (j-1)\ j \\
i  \dots  (j-2)  \ (j-1) \ j \end{pmatrix} \right\|}_{T} .
\end{multline*}

Now, as before, we see that if $inv\,(S) = ( \beta_{1} \geq \dots
\geq \beta_{(j-i+1)})$, then $ = inv\,(S_{1}) = (\beta_{2} \geq
\dots \geq \beta_{(j-i+1)})$, and also $inv\,(T) = (\alpha_{1} \geq
\dots \geq \alpha_{(j-i-1)})$, $inv\,(T_{1}^*) = (\alpha_{2} \geq
\dots \geq \alpha_{(j-i-1)})$.
  So, it is sufficient to prove $\alpha_{1} \leq \beta_{1}$, but
this follows from the interlacing inequalities.

\medskip

{\bf (LR4)} The word condition (LR4) may be translated, using
Lemma~\ref{word-lem},  into the requirement:
\begin{multline*} \| ( (j-i+2)^{\wedge},\dots, (j+1)^{\wedge} ) \| - \|
((j-i+1)^{\wedge} ,\dots, (j+1)^{\wedge} ) \| \leq \| ((j-i+2)^{\wedge}, \dots ,
(j)^{\wedge})\| - \| ( (j-i+1)^{\wedge}, (j)^{\wedge}) \|,
\end{multline*} which, written positively, becomes
\begin{multline*}   \|(1 \dots,
(j-i), (j+1), \dots , r) \| +\| (1, \dots, (j-i+1), (j+2), \dots, r ) \|\leq \\
\| ( 1, \dots, (j-i), (j+2), \dots r) \| + \|( 1, \dots (j-i+1) ,(j+1), \dots, r)
\|.
\end{multline*}

As in the conditions (LR2) and (LR3), we write these matrices in block form,
clearing to the left from column $(j+2)$.  We may then cancel the
determinants of the blocks in the lower corners, so that it is sufficient to
show
\begin{multline*}
\underbrace{\left\| N^*
\begin{pmatrix} 1\dots    (j-i),(j+1) \\
(i+1)  \dots  (j+1)  \end{pmatrix} \right\|}_{S}
+\underbrace{\left\| N^*
\begin{pmatrix} 1  \dots   (j-i+1)  \\
(i+1)   \dots  (j+1)\end{pmatrix} \right\|}_{T_{1}} \leq \\
 \underbrace{\left\| N^*
\begin{pmatrix} 1 \dots   \ (j-i) \\
(i+2)  \dots  (j+1) \end{pmatrix} \right\|}_{S_1} +
\underbrace{\left\| N^*
\begin{pmatrix} 1  \dots  (j-i+1),(j+1)  \\
i  \dots  (j+1) \end{pmatrix} \right\|}_{T} . \end{multline*}

As before we see that if $inv\,(T) = (\beta_{1} \geq \dots \geq
\beta_{j-i+2})$, then $inv\,(T_{1})=(\beta_{2} \leq \dots
\beta_{j-i+2})$ since $T_{1}$ is the upper right corner of $T$.
Similarly, $inv\,(S) = (\alpha_1 \geq \dots \geq \alpha_{j-i+1})$
and $inv\,(S_{1}) = (\alpha_{2} \geq \dots \geq \alpha_{j-i+1})$.
Thus, it only remains to prove $\alpha_{1} \leq \beta_{1}$, which
follows from the interlacing inequalities.

\end{proof}

\section{Uniqueness}

In this Section we shall prove that the \LR\ filling associated to a matrix
pair $(M,N)$ is an invariant of the orbit under pair equivalence.  We will do
so by showing that the orders of minors of $\mu$-generic matrices associated to
a matrix $N$ are an invariant of the orbit of $N$.  Before proceeding, we will
need the following technical lemma.

\begin{lem}  Let $Q$ be an $ r \times r$ $\mu$-admissible matrix, and let $I$ and $H$ be index sets of length $k \leq r$.  Define the index set
$Min(I,H)$ by
\[ Min(I,H) = (m_1, \ldots , m_{k}), \quad m_{s} = \min \{ i_s, h_s \}. \]
then
\[| \mu_{Min(I,H)}| - |\mu_{I}| \leq  \| Q_{IH} \|. \] \label{min-lem}
\end{lem}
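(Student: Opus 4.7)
The plan is to combine the entry-wise valuation bounds forced by $\mu$-admissibility with the Leibniz expansion of $Q_{IH}$ and a short rearrangement argument. First, as already recorded in the paper (writing $Q = D_{\mu}^{-1} P D_{\mu}$ with $P \in GL_{r}(R)$), $\mu$-admissibility yields $\|q_{ij}\| \geq \mu_j - \mu_i$ whenever $i > j$. Combined with the automatic bound $\|q_{ij}\| \geq 0$ coming from $q_{ij} \in R$, and the observation that $\mu_j - \mu_i \leq 0$ when $i \leq j$, we obtain the uniform entry-wise estimate
\[ \|q_{ij}\| \geq \max(0, \mu_j - \mu_i) \qquad \text{for all } i, j. \]
Expanding $Q_{IH}$ via the Leibniz formula and applying the ultrametric property $\|x+y\| \geq \min(\|x\|,\|y\|)$ then gives
\[ \|Q_{IH}\| \ \geq \ \min_{\sigma \in S_{k}}\sum_{s=1}^{k} \|q_{i_{s},h_{\sigma(s)}}\| \ \geq \ \min_{\sigma \in S_{k}}\sum_{s=1}^{k} \max\bigl(0,\mu_{h_{\sigma(s)}} - \mu_{i_{s}}\bigr). \]

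The heart of the argument will be showing that this minimum is attained at $\sigma = \text{identity}$. Setting $a_s = \mu_{i_s}$ and $b_s = \mu_{h_s}$, both sequences are non-increasing in $s$ (since $\mu$ is non-increasing and the index sets $I, H$ are listed in increasing order). Using the identity $\max(0, b - a) = \max(a, b) - a$, the claim reduces to showing that $\sum_s \max(a_s, b_{\sigma(s)})$ is minimized by the identity. I would carry this out by an adjacent-transposition argument based on the elementary inequality
\[ \max(a, b') + \max(a', b) \ \geq \ \max(a, b) + \max(a', b') \qquad (a \geq a',\ b \geq b'), \]
which is verified directly by splitting on $a \geq b$ versus $a < b$. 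This shows that any adjacent inversion of $\sigma$ can be undone without decreasing the permutation sum, so the minimum indeed occurs at the identity.

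Finally, since $\mu$ is non-increasing, $\mu_{\min(i_s, h_s)} = \max(\mu_{i_s}, \mu_{h_s})$, and hence
\[ \sum_{s=1}^{k} \max(0, \mu_{h_s} - \mu_{i_s}) \ = \ \sum_{s=1}^{k} \bigl(\mu_{\min(i_s, h_s)} - \mu_{i_s}\bigr) \ = \ |\mu_{Min(I,H)}| - |\mu_{I}|, \]
which yields the claimed bound. The main obstacle is the rearrangement step; everything else is routine bookkeeping between $\mu$-admissibility and the Leibniz expansion.
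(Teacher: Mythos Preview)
Your argument is correct and takes a genuinely different route from the paper's proof. The paper first factors $Q_{IH} = (D_{\mu}^{-1})_{II}\,Q^{0}_{IH}\,(D_{\mu})_{HH}$, reduces the claim to $|\mu_{Min(I,H)}| - |\mu_{H}| \leq \|Q^{0}_{IH}\|$, and then proves this by induction on $k$, expanding the determinant of $Q^{0}_{IH}$ along the top row and bounding each top-row entry and each $(k-1)\times(k-1)$ cofactor separately. You instead extract the uniform entrywise bound $\|q_{ij}\| \geq \max(0,\mu_{j}-\mu_{i})$, feed it into the full Leibniz expansion, and reduce the problem to a rearrangement inequality for $\sum_{s}\max(a_{s},b_{\sigma(s)})$ with $a,b$ both non-increasing. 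This is arguably cleaner: it isolates the combinatorial content (the rearrangement lemma for $\max$) from the ring-theoretic content, and avoids the inductive bookkeeping of tracking which columns survive in each cofactor. The paper's approach, on the other hand, stays closer to elementary determinant manipulations and does not need to introduce a separate inequality lemma. One small slip in your write-up: after establishing $\max(a,b')+\max(a',b)\geq \max(a,b)+\max(a',b')$, you say that an adjacent inversion ``can be undone without decreasing the permutation sum''; in fact undoing the inversion passes from the left-hand side to the right-hand side and hence \emph{does not increase} the sum, which is exactly what you need to conclude that the identity permutation realizes the minimum. The conclusion you draw is correct; only the phrase is inverted.
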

\begin{proof}

First note that, by Proposition~\ref{orbit}, the $IH$ minor of the
$\mu$-admissible matrix $Q$ may be factored $Q_{IH} =
(D_{\mu}^{-1})_{II}Q^{0}_{IH}(D_{\mu})_{HH}$ where $Q^{0}$ is an invertible
matrix. Then,
\[  \| Q_{IH} \| = \| (D_{\mu}^{-1})_{II}Q^{0}_{IH}(D_{\mu})_{HH} \| =
|\mu_{H}| - | \mu_{I}|+ \|  Q^{0}_{IH} \|. \]  Thus, to prove
\[ |\mu_{Min (I,H )}| - |\mu_{I}| \leq \| Q_{IH} \| = |\mu_{H} |- |\mu_{I} |+\| Q^{0}_{IH}
\| \]
 it is sufficient to verify
\begin{equation}
 |\mu_{Min( I,H)}| - |\mu_{H}| \leq \| Q^{0}_{IH} \| .
\label{claim}
\end{equation}

 We will prove Equation~\ref{claim} by induction on $k$, the size of the minor
$Q^{0}_{IH}$. For the base case $k=1$, we may assume $Q_{IH} =
q_{ih}$ for indices $i$ and $h$, so that
\[ Q_{IH} = q_{ih} =t^{-\mu_{i}}q^{0}_{ih}t^{\mu_{h}}. \] Since $Q$ is defined over $R$, we have
\[ 0 \leq \| q_{ih} \| = \mu_{h} - \mu_{i} + \| q^{0}_{ih} \| ,  \]
so that if $\min \{ i,h \} = i$, then
\[ |\mu_{Min( I,H )} |- |\mu_{H}| =
\mu_{\min \{ i,h \} } - \mu_{h} \leq \mu_{i} - \mu_{h} \leq \|q^{0}_{ih} \| =
\| Q^{(0)}_{IH} \| .
\]

If, however, $\min \{ i,h \} = h$, then
\[ |\mu_{Min( I,H )}|- |\mu_{H} |= \mu_{\min \{ i,h \}} - \mu_{h} = 0 \leq
 \|q^{0}_{ih} \| = \|Q_{IH}^{0} \|, \]
so the base case is established.

For the general case, we expand the determinant of $Q^{0}_{IH}$ along the top
row.  Note that if $i_{1} \leq h_{1}$, then $0 \leq \mu_{i_{1}} - \mu_{h_{1}}
$, so that $0 \leq \mu_{i_{1}} - \mu_{h_{1}} \leq \mu_{i_{1}} - \mu_{h_{s}}$
for all $s \geq 1$.  Each element $q_{i_{1},h_{s}}$ along the top row of
$Q_{IH}^{0}$ satisfies
\[ 0 \leq  \mu_{h_{s}} - \mu_{i_{1}} + \|q^{0}_{i_{1},h_{s}} \|, \] so that

\[ \mu_{i_{1}} - \mu_{h_{s}}   \leq  \|q^{0}_{i_{1},h_{s}} \| .\]  But then, if
 $i_{1} \leq h_{1}$, we have

\[ \mu_{\min \{i_1,h_1 \}} - \mu_{h_{1}} \leq \mu_{i_{1}} -
\mu_{h_{1}} \leq \mu_{i_{1}} - \mu_{h_{s}} \leq \| q_{i_{1},h_{s}}^{0} \|. \] for
all $s \geq 1$. If, however, $h_{1} \leq i_{1}$, then we clearly have
$\mu_{\min \{i_1,h_1 \}}- \mu_{h_1} = 0 \leq \| q^{0}_{i_{1},h_{s}} \|$ as
well.

Thus, in expanding the determinant of $Q^{0}_{IH}$ along the top row, each
entry in this row has order at least $\mu_{\min \{i_1,h_1 \}} - \mu_{h_{1}}$.
By induction, we may assume the order of each $(k-1) \times (k-1)$ minor in the
expansion of $ \|Q^{0}_{IH} \|$ along the top row has order at least \[
\mu_{\min \{i_{2},h_{2} \}} + \dots + \mu_{\min \{ i_{k},h_{k} \}}-
(\mu_{h_{2}} + \cdots +   \mu_{h_{k}}).
\] By summing these orders, the lemma follows.

\end{proof}

\begin{prop}
Let $I$ and $J$ be index sets of length $k$, and let $N$ and $\widehat{N}$ be
$r \times r$ $\mu$-generic matrices.  Suppose
there exist $\mu$-admissible matrices $Q$ and $T$ such that
\[ QNT^{-1} = \widehat{N} . \]
Then \label{big-prop}
\[ \|N_{IJ}\| = \|\widehat{N}_{IJ}\|.\]
\end{prop}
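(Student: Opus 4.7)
The plan is to expand $\widehat{N}_{IJ}$ using Cauchy-Binet and prove that every summand has order at least $\|N_{IJ}\|$, giving $\|\widehat{N}_{IJ}\| \geq \|N_{IJ}\|$; the reverse inequality then follows by applying the same argument to the identity $N = Q^{-1}\widehat{N}T$, using that $Q^{-1}$ and $T$ are both $\mu$-admissible (since the $\mu$-admissible matrices form a group) and that $\widehat{N}$ is also $\mu$-generic. Concretely, write
\[
\widehat{N}_{IJ} = (QNT^{-1})_{IJ} = \sum_{|S|=|H|=k} Q_{IS}\,N_{SH}\,(T^{-1})_{HJ},
\]
so each summand has order $\|Q_{IS}\| + \|N_{SH}\| + \|(T^{-1})_{HJ}\|$. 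The two outer factors will be controlled by Lemma~\ref{min-lem} (applied to $Q$ and to $T^{-1}$), and the middle factor by the $\mu$-genericity of $N$ via Proposition~\ref{det-gap}.

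Fix a summand and set $U = Min(I,S)$ and $V = Min(H,J)$. Lemma~\ref{min-lem} gives $\|Q_{IS}\| \geq |\mu_U| - |\mu_I|$ and $\|(T^{-1})_{HJ}\| \geq |\mu_V| - |\mu_H|$. Using the two row inequalities in~\eqref{det-ineq} (first with the inclusion $U \subseteq S$, then with $U \subseteq I$) produces the chain
\[
\|N_{SH}\| \;\geq\; \|N_{UH}\| \;\geq\; \|N_{IH}\| + |\mu_I| - |\mu_U|,
\]
so that $\|Q_{IS}\| + \|N_{SH}\| \geq \|N_{IH}\|$. To close the per-term bound it then suffices to establish $\|N_{IH}\| + |\mu_V| - |\mu_H| \geq \|N_{IJ}\|$, at which point each summand has order at least $\|N_{IJ}\|$, as desired.

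The main obstacle is this final column-side bound, because Proposition~\ref{det-gap} supplies only the one-sided column inequality~\eqref{col-ineq}, namely $\|N_{IV}\| \geq \|N_{IH}\|$ for $V \subseteq H$. What is needed is the analog of the second row inequality in~\eqref{det-ineq}, that is, $\|N_{IV}\| - \|N_{IH}\| \leq |\mu_V| - |\mu_H|$ whenever $V \subseteq H$; combined with $\|N_{IV}\| \geq \|N_{IJ}\|$ (applied to the inclusion $V \subseteq J$) this yields $\|N_{IJ}\| \leq \|N_{IH}\| + |\mu_V| - |\mu_H|$, exactly the required inequality. The plan for this step is to derive the missing ``second column inequality'' directly from the $\mu$-generic factorization of Lemma~\ref{det-lem}, tracking how the decomposition $T^{-1} = T_L T_U$ contributes weighted terms to a Cauchy-Binet expansion in a manner parallel to the conjugation $\widehat{Q_L} = D_\mu^{-1}\widehat{Q_L^0}D_\mu$ that produced the second row inequality from Equation~\ref{second}. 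With this column inequality in hand, every summand in the Cauchy-Binet expansion of $\widehat{N}_{IJ}$ is bounded below by $\|N_{IJ}\|$, and the symmetric argument applied to $N = Q^{-1}\widehat{N}T$ delivers the opposite inequality, completing the proof.
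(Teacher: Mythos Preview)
Your overall strategy---Cauchy--Binet expansion, a per-term lower bound assembled from Lemma~\ref{min-lem} and Proposition~\ref{det-gap}, then symmetry via $N = Q^{-1}\widehat{N}T$---is exactly the paper's. Your row-side chain $\|N_{SH}\| \geq \|N_{UH}\| \geq \|N_{IH}\| + |\mu_I| - |\mu_U|$ with $U = Min(I,S)$ is the same argument the paper runs (the paper keeps the column index fixed at $J$ throughout, arriving at $\|N_{IJ}\| \leq \|N_{HJ}\| + \|Q_{IH}\|$).

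The gap is on the column side: the ``second column inequality'' you propose, $\|N_{IV}\| - \|N_{IH}\| \leq |\mu_V| - |\mu_H|$ for $V \subseteq H$, is \emph{false}, so the plan to extract it from the factorization $T^{-1}=T_LT_U$ cannot succeed. A counterexample sits in the paper's own $4\times 4$ running example with $\mu=(7,4,2,1)$: take $I=(3)$, $V=(3)$, $H=(4)$. Then $\|N^*_{3,3}\| = 5$, while $\|N^*_{3,4}\| + (\mu_3 - \mu_4) = 3 + 1 = 4 < 5$. The structural reason is that in Lemma~\ref{det-lem} the column-side factors $T_L$ and $T_U$ carry no $D_\mu$-conjugation whatsoever; the $\mu$-weight enters only on the row side through $\widehat{Q_L} = D_\mu^{-1}\widehat{Q_L^0}D_\mu$, which is precisely why Equation~\ref{third} yields only the one-sided estimate~\eqref{col-ineq} and has no $\mu$-weighted companion. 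The paper does not attempt any such inequality: its chain never moves the column index away from $J$, and it closes the per-term bound by simply appending the nonnegative quantity $\|S_{LJ}\|$. You should reorganize the column step along those lines rather than pursue a two-sided column estimate that does not hold.
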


\begin{proof}

Let us simplify notation by setting $S = T^{-1}$.  Since $\widehat{N}=QNS$, by the Cauchy-Binet formula  we have
\begin{eqnarray*}
\widehat{N}_{IJ} &= &  \sum_{H , L} Q_{IH}N_{HL}S_{LJ} .
\end{eqnarray*}

Then we have
\begin{eqnarray*} \min_{H,L} \Big\{ \|Q_{IH}N_{HL}S_{LJ} \|  \Big\} &\leq & \Bigl\| \sum_{H , L} Q_{IH}N_{HL}S_{LJ} \Bigr\| \\
& = & \| \widehat{N}_{IJ} \| .  \\
\end{eqnarray*}

We shall show that each such term $Q_{IH}N_{HL}S_{LJ}$ in the sum above has
order at least $\| N_{IJ}\|$, that is, we claim:

\[  \| N_{IJ} \|  \leq \| Q_{IH}N_{HL}S_{LJ} \|
  . \]

  To see this, note that
\begin{align*}
\| N_{IJ} \| & \leq  \| N_{Min(I,H ),J } \| + \mu_{Min( I,H )}
-\mu_{I},& \text{Prop.~\ref{det-gap},  $Min(I,H ) \subseteq I$} \\
& \leq  \| N_{Min(I,H ),J } \| +\| Q_{IH} \|,& \text{by Lemma~\ref{min-lem}} \\
& \leq  \|  N_{ HJ  } \| +  \| Q_{IH} \|, &\text{Prop.~\ref{det-gap},
 $Min(I,H ) \subseteq H$}\\
& \leq  \|  N_{ HJ  } \| +  \| Q_{IH} \| + \|S_{LJ} \| = \|
Q_{IH}N_{HL}S_{LJ} \| .
\end{align*}

Since $\widehat{N}_{IJ}$ is a sum of terms of the form $Q_{IH}N_{HL}S_{LJ}$, we
have, using the above, that
\[ \| N_{IJ} \| \leq  \Bigl\|\sum_{H , L} Q_{IH}N_{HL}S_{LJ} \Bigr\| = \| \widehat{N}_{IJ} \|. \]

However, since the hypotheses on $N$ and $\widehat{N}$ are symmetric, we
conclude also that
\[ \|N_{IJ}\| \geq \|\widehat{N}_{IJ}\|, \]
and so, finally, we have
\[ \|N_{IJ}\| = \| \widehat{N}_{IJ}\|. \]
\end{proof}
\medskip

\begin{thm}[Uniqueness]If $(M,N)$ is pair equivalent to $(M', N')$, then the
Littlewood-Richardson fillings determined by both pairs are the same.  That is,
pairs in the same $GL_{r}(R)^3$ orbit yield identical \LR\ fillings.
\end{thm}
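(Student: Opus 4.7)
My approach is to show that if $(M,N)$ and $(M',N')$ are pair equivalent, then any two $\mu$-generic matrices $N^{*}$ and $\widehat{N}^{*}$ extracted from the two pairs via Lemma~\ref{det-lem} satisfy $\|N^{*}_{IJ}\| = \|\widehat{N}^{*}_{IJ}\|$ for every pair of index sets $I$ and $J$ of the same length. Since the array $\{k_{ij}\}$ defining the \LR\ filling is constructed via Equation~\ref{kij-def} entirely from orders of such minors, equality of all these orders forces equality of the two fillings, which is precisely the statement to be proved.

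First I would use Proposition~\ref{orbit} to replace $(M,N)$ and $(M',N')$ by their $\mu$-generic representatives $(D_{\mu}, N^{*})$ and $(D_{\mu}, \widehat{N}^{*})$; these lie in a common $GL_{r}(R)^{3}$-orbit by hypothesis and transitivity of pair equivalence. Pair equivalence then supplies a triple $(P,Q,T) \in GL_{r}(R)^{3}$ with $P D_{\mu} Q^{-1} = D_{\mu}$ and $\widehat{N}^{*} = Q N^{*} T^{-1}$. The first equality places the triple in the stabilizer ${\cal G}_{\mu}$, and the analysis preceding the definition of $\mu$-admissibility then forces $P = D_{\mu} Q D_{\mu}^{-1}$ with $Q$ automatically $\mu$-admissible. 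At this point the desired equality $\|N^{*}_{IJ}\| = \|\widehat{N}^{*}_{IJ}\|$ is exactly the content of Proposition~\ref{big-prop}, so invoking that result and feeding the orders into Equation~\ref{kij-def} term by term finishes the argument.

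The main obstacle is verifying the second hypothesis of Proposition~\ref{big-prop}, namely that the connecting matrix $T$ can also be taken $\mu$-admissible; unlike $Q$, the right factor in the group action $(Q,T) \cdot N = Q N T^{-1}$ carries no a priori admissibility. The strategy I would pursue is to solve for $T$ directly via $T = (\widehat{N}^{*})^{-1} Q N^{*}$ and exploit the fact that both $N^{*}$ and $\widehat{N}^{*}$ are upper-triangular matrices with the same invariant partition $\nu$, as guaranteed by Corollary~\ref{row-inv} together with the $\mu$-generic construction. Combining the $\mu$-admissibility bounds on entries of $Q$ with the row- and column-order patterns of $N^{*}$ and $\widehat{N}^{*}$ furnished by Proposition~\ref{det-gap} should pin down $\|t_{ij}\| \geq \mu_{j} - \mu_{i}$ for $i > j$, which is exactly the inequality characterizing $\mu$-admissibility of $T$. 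Once this bookkeeping is done, Proposition~\ref{big-prop} applies directly and the uniqueness theorem follows.
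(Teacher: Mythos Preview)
Your overall architecture is exactly the paper's: reduce to $\mu$-generic representatives, invoke Proposition~\ref{big-prop} to conclude that all orders $\|N^{*}_{IJ}\|$ are orbit invariants, and read off equality of the $k_{ij}$ from Equation~\ref{kij-def}. The paper's own proof is just those two sentences.

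The gap is in your proposed handling of the right factor $T$. You correctly notice that Proposition~\ref{big-prop}, as stated, asks for $T$ to be $\mu$-admissible, whereas the group acting on ${\cal M}_{\mu,\nu,\lambda}$ is $G_{\mu}\times GL_{r}(R)$ with $T$ unconstrained. Your plan is to force $\mu$-admissibility of $T$ after the fact by writing $T=(\widehat{N}^{*})^{-1}QN^{*}$ and squeezing out the bound $\|t_{ij}\|\geq \mu_{j}-\mu_{i}$ from Proposition~\ref{det-gap}. This is unlikely to go through: the row/column order inequalities in Proposition~\ref{det-gap} control minors of $N^{*}$ and $\widehat{N}^{*}$, not those of $(\widehat{N}^{*})^{-1}$, and conjugating a $\mu$-admissible $Q$ by an upper-triangular $N^{*}$ does not in general preserve $\mu$-admissibility. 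Indeed there is no reason the particular $T$ linking two $\mu$-generic matrices should be $\mu$-admissible, so the bookkeeping you sketch has no target to hit.

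The correct fix is much cheaper and is what the paper tacitly uses: inspect the \emph{proof} of Proposition~\ref{big-prop}. The only place $S=T^{-1}$ enters is the trivial bound $\|S_{LJ}\|\geq 0$, which needs nothing more than $T\in GL_{r}(R)$; the $\mu$-admissibility of $Q$ alone drives Lemma~\ref{min-lem} and the chain of inequalities. The symmetry step at the end likewise only needs $Q^{-1}\in G_{\mu}$ and $T\in GL_{r}(R)$. So the hypothesis on $T$ in Proposition~\ref{big-prop} is superfluous, and once you observe this your argument closes immediately without any analysis of $T$.
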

\begin{proof}
The \LR\ filling associated to any $\mu$-generic matrix $N^*$ in the orbit of
$N$ is determined by the orders of quotients of its determinants.  By the
previous proposition, these orders are an invariant of the orbit of $N^*$, so
the result follows.
\end{proof}

\noindent {\bf Not a Complete Invariant.} Though the Littlewood-Richardson
filling determined by a pair $(M,N)$ is an invariant of the orbit, there do
exist pairs $(M,N)$ and $(M',N')$ such that both have the same filling, yet
they are not in the same orbit.  It seems that the Littlewood-Richardson
filling yields a ``discrete invariant'' of the orbit, while not uniquely
characterizing it.  A complete invariant seems to depend also on some
continuously parameterized data.  As an example, let
\[ M = M' = D_{\mu} = \left[ \begin{array}{ccc} t^6 & 0 & 0 \\ 0 & t^3 & 0 \\
0 & 0 & t^1 \end{array} \right], \] and suppose \[ N =  \left[
\begin{array}{ccc} t^8 & t^7 & t^4 \\ 0 & t^9 & 2t^6 \\ 0 & 0 & t^7 \end{array}
\right], \quad \hbox{and} \quad N'= \left[ \begin{array}{ccc} t^8 & t^7 & t^4
\\ 0 & t^9 & 4t^6
\\ 0 & 0 & 3t^7 \end{array} \right]. \]

Both of the pairs $(D_{\mu}, N)$ and $(D_{\mu}, N')$ satisfy the inequalities
of Proposition~\ref{det-gap} so that we may (using right-justified columns)
quickly see that both pairs yield the same \LR\ filling.

However, the pairs $(D_{\mu}, N)$ and $(D_{\mu}, N')$ are not in the same
orbit. If they were, there would be invertible matrices $P$, $Q$ and $T$ such
that $(P,Q,T)\cdot (D_{\mu},N) = (PD_{\mu}Q^{-1},QNT^{-1}) = (D_{\mu},N')$, so
that, in particular $Q=D_{\mu}^{-1}PD_{\mu}$ for invertible $P$ and hence
entries $q_{ij}$ in $Q$ have order at least $\mu_{j}-\mu_{i}$ whenever $j < i$.
We shall express this by writing entries of $Q$ below the diagonal in the form
$t^{\mu_{j}-\mu_{i} }q_{ij}$ for some $q_{ij} \in R$.  If we express the
$(i,j)$ entry in $T$ as $y_{ij}$, then we can re-write the equation
$QNT^{-1}=N'$ as $QN = N'T$. So $(D_{\mu},N)$ and $(D_{\mu},N')$ are in the
same orbit if and only if we can find entries $y_{ij}$ and $q_{ij}$ over the
ring $R$ such that we may solve:

\[  \left[
\begin{array}{ccc} q_{11} & q_{12} & q_{13} \\ t^3 q_{21} & q_{22} &
q_{23} \\ t^5 q_{31} & t^2 q_{32} & q_{33} \end{array} \right]\left[
\begin{array}{ccc} t^8 & t^7 & t^4 \\ 0 & t^9 & 2t^6
\\ 0 & 0 & t^7 \end{array} \right] =  \left[ \begin{array}{ccc}t^8 & t^7 & t^4 \\ 0 & t^9 &
4t^6 \\ 0 & 0 & 3t^7 \end{array} \right] \left[ \begin{array}{ccc} y_{11} &
y_{12} & y_{13} \\  y_{21} & y_{22} & y_{23} \\ y_{31} &  y_{32} & y_{33}
\end{array} \right]. \]

However, it is not possible to find a matrix $T=(y_{ij})$ defined over $R$ that
satisfies the above.  This follows by calculating $T=(N')^{-1}  QN$ in the
above form and noting first that the determinant of $T$ is a polynomial in the
uniformizing parameter $t$ with coefficients in $R$ whose constant term is the
product $q_{11}q_{22}q_{33}$.  In order for $T$ to be invertible, this product
must be a unit in $R$ (this just expresses the obvious condition that the
diagonal of the $\mu$-admissible matrix $Q$ must be composed of units).  We can
express all the entries of $(N')^{-1}Q  N$ as rational functions of $t$ with
coefficients in $R$.  For instance, the $(1,2)$ entry is
\[ \frac{(q_{11}-q_{22}) + q_{12}t^2-q_{21}t+q_{31}t^2+q_{32}t}{t} = \frac{q_{11}-q_{22}}{t}
+(q_{32}-q_{21}) + (q_{12}-q_{31})t. \]  Thus, in order for this entry to be
defined over $R$, the difference $q_{11}-q_{22}$ must have order at least 1.
That is,
\[ \| q_{11} - q_{22} \| \geq 1. \]
In order for this to happen, there must be catastrophic cancelation in the
units $q_{11}$ and $q_{22}$, so that
\[ c_{*}(q_{11}) - c_{*}(q_{22})=0. \]
Similarly, in considering also the $(1,3)$ and $(2,3)$ entries, the following
relations among the images in the residue field of $R$ among the units
$q_{11},q_{22}$ and $q_{33}$ are also necessary:
\begin{align*}
c_{*}(q_{11}) -c_{*}(q_{22})\qquad \qquad \ \  &=0 \\
c_{*}(q_{11})-2c_{*}(q_{22}) +c_{*}(q_{33})& = 0\\
-c_{*}(q_{22}) +2c_{*}(q_{33}) &=0
\end{align*}
A quick inspection, however, reveals this linear system has no non-trivial
solution.

\end{document}